\renewcommand*\showkeyslabelformat[1]{%
\fbox{\parbox[t]{1.4 cm}{\raggedright\normalfont\tiny%
\url{#1}}}}
\newcommand{\email}[1]{\href{mailto:#1}{\nolinkurl{#1}}}
\definecolor{color2b}{rgb}{0.6,0.8,1.0}
\newlength{\mySubFigSize}
\definecolor{labelkey}{rgb}{0,0.08,0.45}
\definecolor{refkey}{rgb}{0,0.6,0.0}
\definecolor{dblue}{HTML}{0455BF}
\definecolor{dgreen}{HTML}{02724A}
\definecolor{Dblue}{HTML}{8602DC}
\definecolor{dred}{HTML}{D90404}
\renewcommand{\leq}{\ensuremath{\leqslant}}
\renewcommand{\geq}{\ensuremath{\geqslant}}
\newcommand{\abscal}[2]{\left|\left\langle{{#1}\mid{#2}}%
\right\rangle\right|} 
\newcommand{\scal}[2]{{\langle{{#1}\mid{#2}}\rangle}}
\newcommand{\sscal}[2]{{\big\langle{{#1}\mid{#2}}\big\rangle}}
\newcommand{\pair}[2]{\langle{{#1},{#2}}\rangle} 
\newcommand{\Pair}[2]{\big\langle{{#1},{#2}}\big\rangle} 
\newcommand{\menge}[2]{\big\{{#1}~|~{#2}\big\}} 
\newcommand{\norma}[1]{{#1}^{\mbox{\tiny$\sharp$}}}
\newcommand{\HH}{\ensuremath{{\mathcal X}}}
\newcommand{\XX}{\ensuremath{{\mathcal X}}}
\newcommand{\YY}{\ensuremath{{\mathcal Y}}}
\newcommand{\GG}{\ensuremath{{\mathcal Y}}}
\newcommand{\KK}{\ensuremath{{\mathcal Z}}}
\newcommand{\Sum}{\ensuremath{\displaystyle\sum}}
\newcommand{\emp}{\ensuremath{\varnothing}}
\newcommand{\Id}{\ensuremath{\operatorname{Id}}}
\newcommand{\RR}{\ensuremath{\mathbb{R}}}
\newcommand{\RP}{\ensuremath{\left[0,{+}\infty\right[}}
\newcommand{\BL}{\ensuremath{\EuScript B}}
\newcommand{\RPP}{\ensuremath{\left]0,{+}\infty\right[}}
\newcommand{\RPX}{\ensuremath{\left[0,{+}\infty\right]}}
\newcommand{\RX}{\ensuremath{\left]{-}\infty,{+}\infty\right]}}
\newcommand{\NN}{\ensuremath{\mathbb N}}
\newcommand{\ZZ}{\ensuremath{\mathcal Z}}
\newcommand{\intdom}{\ensuremath{\operatorname{int}\operatorname{dom}}}
\newcommand{\weakly}{\ensuremath{\:\rightharpoonup\:}}
\newcommand{\exi}{\ensuremath{\exists\,}}
\newcommand{\ran}{\ensuremath{\operatorname{ran}}}
\newcommand{\zer}{\ensuremath{\operatorname{zer}}}
\newcommand{\pinf}{\ensuremath{{{+}\infty}}}
\newcommand{\dom}{\ensuremath{\operatorname{dom}}}
\newcommand{\prox}{\ensuremath{\operatorname{prox}}}
\newcommand{\proj}{\ensuremath{\operatorname{proj}}}
\newcommand{\Fix}{\ensuremath{\operatorname{Fix}}}
\newcommand{\gra}{\ensuremath{\operatorname{gra}}}
\newcommand{\zeroun}{\ensuremath{\left]0,1\right[}}
\newtheorem{theorem}{Theorem}[section]
\newtheorem{lemma}[theorem]{Lemma}
\newtheorem{corollary}[theorem]{Corollary}
\newtheorem{proposition}[theorem]{Proposition}
\theoremstyle{plain}{\theorembodyfont{\rmfamily}%
}
\theoremstyle{plain}{\theorembodyfont{\rmfamily}%
\newtheorem{example}[theorem]{Example}}
\theoremstyle{plain}{\theorembodyfont{\rmfamily}%
\newtheorem{remark}[theorem]{Remark}}
\theoremstyle{plain}{\theorembodyfont{\rmfamily}%
}
\theoremstyle{plain}{\theorembodyfont{\rmfamily}%
}
\theoremstyle{plain}{\theorembodyfont{\rmfamily}%
\newtheorem{definition}[theorem]{Definition}}
\theoremstyle{plain}{\theorembodyfont{\rmfamily}%
\newtheorem{problem}[theorem]{Problem}}
\theoremstyle{plain}{\theorembodyfont{\rmfamily}%
}
\numberwithin{equation}{section}
\let\to\rightarrow
\newcommand*\mute{{\mkern 2mu\cdot\mkern 2mu}}
\begin{document}

\title{\sffamily\huge%
\vskip -12mm 
Warped Proximal Iterations for Monotone Inclusions\thanks{Contact 
author: P. L. Combettes, \email{plc@math.ncsu.edu},
phone: +1 (919) 515 2671.
This work was supported by the National Science Foundation under
grant DMS-1818946.}}

\author{Minh N. B\`ui and Patrick L. Combettes\\
\small North Carolina State University,
Department of Mathematics,
Raleigh, NC 27695-8205, USA\\
\small \email{mnbui@ncsu.edu}\: and \:\email{plc@math.ncsu.edu}
}
\date{~}
\maketitle
\noindent
{\bfseries Abstract.}
Resolvents of set-valued operators play a central role in various
branches of mathematics 
and in particular in the design and the analysis of
splitting algorithms for solving monotone inclusions. We
propose a generalization of this notion, called warped
resolvent, which is constructed with the help of an auxiliary
operator. The properties of warped resolvents are investigated and
connections are made with existing notions. 
Abstract weak and strong convergence principles based on warped
resolvents are proposed and shown to not only provide a synthetic
view of splitting algorithms but to also constitute an effective 
device to produce new solution methods for challenging inclusion
problems. 

\noindent{\bfseries Keywords.}
Monotone inclusion,
operator splitting,
strong convergence,
warped resolvent,
warped proximal iterations.

\noindent{\bfseries 2010 Mathematics Subject Classification:}
47J25, 47N10, 47H05, 90C25.

\section{Introduction}

A generic problem in nonlinear analysis and optimization is to find
a zero of a maximally monotone operator $M\colon\HH\to 2^{\HH}$,
where $\HH$ is a real Hilbert space.
The most elementary method designed for this task 
is the proximal point algorithm \cite{Roc76a}
\begin{equation}
\label{e:prox1}
(\forall n\in\NN)\quad x_{n+1}=J_{\gamma_n M}x_n, 
\quad\text{where}\quad\gamma_n\in\RPP
\quad\text{and}\quad J_{\gamma_n M}=(\Id+\gamma_n M)^{-1}.
\end{equation}
In practice, the execution of \eqref{e:prox1} may be hindered by 
the difficulty of evaluating the resolvents
$(J_{\gamma_nM})_{n\in\NN}$.
Thus, even in the simple case when $M$ is the sum of two
monotone operators $A$ and $B$, there is no mechanism to 
express conveniently the resolvent of $M$ in terms of operators
involving $A$ and $B$ separately. To address this issue, various
splitting strategies have been proposed to handle increasingly
complex formulations in which $M$ is a composite operator assembled
from several elementary blocks that can be linear operators and
monotone operators 
\cite{Bane20,Livre1,Botr19,Bot13d,Bot13c,Bric15,Siop13,MaPr18,Svva12,
Cond13,Ragu19,Bang13}. In the present paper, we explore a different
path by placing at the core of our analysis the following extension
of the classical notion of a resolvent.

\begin{definition}[Warped resolvent]
\label{d:wr}
Let $\XX$ be a reflexive real Banach space with topological dual
$\XX^*$, let $D$ be a nonempty subset of $\XX$,
let $K\colon D\to\XX^*$, and let $M\colon\XX\to 2^{\XX^*}$
be such that $\ran K\subset\ran(K+M)$ and $K+M$ is injective
(see Definition~\ref{d:inj}).
The warped resolvent of $M$ with kernel $K$ is
$J_M^K=(K+M)^{-1}\circ K$. 
\end{definition}

A main motivation for introducing warped resolvents is that,
through judicious choices of a kernel $K$ tailored to the structure
of an inclusion problem, one can create simple patterns to design
and analyze new, flexible, and modular splitting algorithms. 
At the same time, the theory required to analyze
the static properties of warped resolvents as
nonlinear operators, as well as the dynamics of algorithms using
them, needs to be
developed as it cannot be extrapolated from the classical case,
where $K$ is simply the identity operator. In the present paper, 
this task is undertaken and we illustrate the pertinence of warped
iteration methods through applications to challenging
monotone inclusion problems.

The paper is organized as follows. Section~\ref{sec:2} is dedicated
to notation and background.
In Section~\ref{sec:3}, we provide important illustrations of
Definition~\ref{d:wr} and make connections with constructions 
found in the literature. The properties of warped resolvents are
also discussed in that section. Weakly and strongly convergent
warped proximal iteration methods are introduced and analyzed in
Section~\ref{sec:4}. Besides the use of kernels varying at each
iteration, our framework also features evaluations of warped
resolvents at points that may not be the current iterate, which
adds considerable flexibility and models in particular inertial
phenomena and other perturbations. New splitting algorithms
resulting from the proposed warped iteration constructs are devised
in Section~\ref{sec:5} to solve monotone inclusions.

\section{Notation and background}
\label{sec:2}

Throughout the paper, $\HH$, $\GG$, and $\KK$ are reflexive real
Banach spaces. We denote the canonical pairing between 
$\XX$ and its topological dual $\XX^*$ by $\pair{\cdot}{\cdot}$,
and by
$\Id$ the identity operator. The weak convergence of a sequence
$(x_n)_{n\in\NN}$ to $x$ is denoted by $x_n\weakly x$, while
$x_n\to x$ denotes its strong convergence.
The space of bounded linear operators from $\HH$ to $\GG$ is
denoted by $\BL(\HH,\GG)$, and we set $\BL(\HH)=\BL(\HH,\HH)$.

Let $M\colon\HH\to 2^{\HH^*}$. We denote 
by $\gra M=\menge{(x,x^*)\in\HH\times\HH^*}{x^*\in Mx}$ the graph
of 
$M$, by $\dom M=\menge{x\in\HH}{Mx\neq\emp}$ the domain of $M$, 
by $\ran M=\menge{x^*\in\HH^*}{(\exi x\in\HH)\;x^*\in Mx}$ 
the range of $M$,
by $\zer M=\menge{x\in\HH}{0\in Mx}$ the set of zeros of $M$, 
and by $M^{-1}$ the inverse of $M$, i.e., 
$\gra M^{-1}=\menge{(x^*,x)\in\HH^*\times\HH}{x^*\in Mx}$. 
Further, $M$ is monotone if
\begin{equation}
\big(\forall (x,x^*)\in\gra M\big)
\big(\forall (y,y^*)\in\gra M\big)\quad
\pair{x-y}{x^*-y^*}\geq 0,
\end{equation}
and maximally monotone if, in addition, there exists no
monotone operator $A\colon\HH\to 2^{\HH^*}$ such that
$\gra M\subset\gra A\neq\gra M$.
We say that $M$ is uniformly monotone
with modulus $\phi\colon\RP\to\RPX$ if 
$\phi$ is increasing, vanishes only at $0$, and 
\begin{equation}
\label{e:umonotone}
\big(\forall (x,x^*)\in\gra M\big)
\big(\forall (y,y^*)\in\gra M\big)\quad
\pair{x-y}{x^*-y^*}\geq\phi\big(\|x-y\|\big).
\end{equation}
In particular, $M$ is strongly monotone with constant
$\alpha\in\RPP$ if it is uniformly
monotone with modulus $\phi=\alpha|\mute|^2$.

\begin{definition}
\label{d:inj}
An operator $M\colon\HH\to 2^{\HH^*}$ is injective if 
$(\forall x\in\HH)(\forall y\in\HH)$
$Mx\cap My\neq\emp$ $\Rightarrow$ $x=y$.
\end{definition}

The following lemma, which concerns a type of duality for monotone
inclusions studied in \cite{Joca16,Penn00,Robi99}, will be
instrumental.

\begin{lemma}
\label{l:5498}
Let $A\colon\GG\to 2^{\GG^*}$ and $B\colon\KK\to 2^{\KK^*}$
be maximally monotone, let $L\in\BL(\GG,\KK)$,
let $s^*\in\GG^*$, and let $r\in\KK$.
Suppose that $\HH=\GG\times\KK\times\KK^*$ (hence
$\HH^*=\GG^*\times\KK^*\times\KK$), define
\begin{equation}
\label{e:1571}
M\colon\HH\to 2^{\HH^*}\colon(x,y,v^*)\mapsto
(-s^*+Ax+L^*v^*)\times(By-v^*)\times\{r-Lx+y\},
\end{equation}
and set
$Z=\menge{(x,v^*)\in\GG\times\KK^*}{s^*-L^*v^*\in Ax\;\text{and}\;
Lx-r\in B^{-1}v^*}$.
In addition, denote by $\mathscr{P}$ the set of solutions
to the primal problem
\begin{equation}
\label{e:p4}
\text{find}\;\: x\in\GG\;\:\text{such that}\;\:
s^*\in Ax+L^*\big(B(Lx-r)\big),
\end{equation}
and by $\mathscr{D}$ the set of solutions to the dual problem
\begin{equation}
\label{e:d4}
\text{find}\;\:v^*\in\KK^*\;\:\text{such that}\;\:
-r\in {-}L\big(A^{-1}(s^*-L^*v^*)\big)+B^{-1}v^*.
\end{equation}
Then the following hold:
\begin{enumerate}
\item
\label{l:5498i-}
$Z$ is a closed convex subset of 
$\mathscr{P}\times\mathscr{D}$.
\item
\label{l:5498i}
$M$ is maximally monotone.
\item
\label{l:5498ii}
Suppose that $(x,y,v^*)\in\zer M$.
Then $(x,v^*)\in Z$, $x\in\mathscr{P}$,
and $v^*\in\mathscr{D}$.
\item
\label{l:5498iii}
$\mathscr{P}\neq\emp$ $\Leftrightarrow$
$\mathscr{D}\neq\emp$ $\Leftrightarrow$
$Z\neq\emp$ $\Leftrightarrow$
$\zer M\neq\emp$.
\end{enumerate}
\end{lemma}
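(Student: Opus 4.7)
The plan is to treat items (i)--(iv) in order, with the maximal monotonicity in (ii) carrying the only real technical load; (i), (iii), and (iv) then follow from coordinate-wise unpacking of $M$ and elementary algebra on $Z$.

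For (i), direct substitution shows that $(x,v^*)\in Z$ forces $x\in\mathscr{P}$ and $v^*\in\mathscr{D}$: rewriting $Lx-r\in B^{-1}v^*$ as $v^*\in B(Lx-r)$ and combining with $s^*-L^*v^*\in Ax$ gives $s^*\in Ax+L^*(B(Lx-r))$, and the dual statement is symmetric. For closedness and convexity, I would write $Z=\zer T$ with
\[
T\colon\GG\times\KK^*\to 2^{\GG^*\times\KK}\colon(x,v^*)\mapsto\big(Ax-s^*+L^*v^*\big)\times\big(B^{-1}v^*+r-Lx\big),
\]
which is the sum of the maximally monotone block-diagonal operator $A\times B^{-1}$ and a skew continuous affine perturbation, and is therefore itself maximally monotone by the argument used below for $M$; since the zero set of a maximally monotone operator is closed and convex, so is $Z$.

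For (ii), I would decompose $M=A_0+S$ on $\HH=\GG\times\KK\times\KK^*$, where
\[
A_0\colon(x,y,v^*)\mapsto Ax\times By\times\{0\}\quad\text{and}\quad S\colon(x,y,v^*)\mapsto\big(-s^*+L^*v^*,\,-v^*,\,r-Lx+y\big).
\]
The operator $A_0$ is maximally monotone as a product of three maximally monotone factors (the third being the zero operator from $\KK^*$ to $\KK=\KK^{**}$). The operator $S$ is affine and continuous on all of $\HH$, and its linear part $(x,y,v^*)\mapsto(L^*v^*,-v^*,-Lx+y)$ is skew: the duality pairing $\pair{x}{L^*v^*}+\pair{y}{-v^*}+\pair{v^*}{-Lx+y}$ collapses to $0$ via $\pair{x}{L^*v^*}=\pair{Lx}{v^*}$ and the reflexive identification of the $\KK\times\KK^*$ and $\KK^*\times\KK^{**}$ pairings. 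Thus $S$ is monotone with full domain, and Rockafellar's sum theorem in a reflexive Banach space delivers the maximal monotonicity of $M=A_0+S$.

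For (iii), unfolding $0\in M(x,y,v^*)$ coordinate-wise yields $s^*-L^*v^*\in Ax$, $v^*\in By$, and $y=Lx-r$; substituting the last relation into the second gives $v^*\in B(Lx-r)$, equivalently $Lx-r\in B^{-1}v^*$, so $(x,v^*)\in Z$, and (i) delivers $x\in\mathscr{P}$ and $v^*\in\mathscr{D}$. For (iv), the implications $\zer M\neq\emp\Rightarrow Z\neq\emp\Rightarrow\mathscr{P},\mathscr{D}\neq\emp$ are immediate from (iii) and (i); conversely, $Z\neq\emp\Rightarrow\zer M\neq\emp$ by setting $y:=Lx-r$ for any $(x,v^*)\in Z$, while $\mathscr{P}\neq\emp\Rightarrow Z\neq\emp$ (resp.\ $\mathscr{D}\neq\emp\Rightarrow Z\neq\emp$) follows by extracting from the primal (resp.\ dual) inclusion a witness $v^*\in B(Lx-r)$ with $s^*-L^*v^*\in Ax$ (resp.\ $x\in A^{-1}(s^*-L^*v^*)$ with $Lx-r\in B^{-1}v^*$). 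The sole anticipated difficulty is invoking the appropriate form of the monotone-operator sum theorem in a reflexive Banach space to conclude (ii); everything else amounts to disciplined bookkeeping on the coordinates defining $M$ and $Z$.
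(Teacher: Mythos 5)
Your proposal is correct, and for the two substantive items it tracks the paper closely: the proof of (ii) uses the same splitting of $M$ into a block-diagonal maximally monotone part plus a bounded skew part whose pairing vanishes, followed by the sum theorem for a maximally monotone operator and a monotone continuous operator with full domain (the paper invokes \cite[Theorem~24.1(a)]{Simo08}; your placement of the constants $-s^*$ and $r$ in the affine summand rather than in the set-valued summand is immaterial), and the proof of (iii) is the same coordinate-wise unfolding. Where you genuinely diverge is in (i) and in the equivalence $\mathscr{P}\neq\emp\Leftrightarrow\mathscr{D}\neq\emp\Leftrightarrow Z\neq\emp$ of (iv): the paper simply cites \cite[Proposition~2.1(i)]{Joca16} for both, whereas you supply direct arguments --- for the topological part of (i) you realize $Z$ as the zero set of the Kuhn--Tucker operator $T=(A\times B^{-1})+\text{skew}$, which is maximally monotone by the same sum-theorem mechanism, and hence has closed convex zero set; for (iv) you extract the dual witness $v^*\in B(Lx-r)$ directly from the primal inclusion (and symmetrically). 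Both of these direct arguments are sound, and what they buy is a self-contained proof at the cost of a little extra length; the paper's citations buy brevity and consistency with the duality framework of \cite{Joca16} that it relies on elsewhere. No gaps.
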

\begin{proof}
\ref{l:5498i-}:
\cite[Proposition~2.1(i)(a)]{Joca16}.

\ref{l:5498i}:
Define
\begin{equation}
\begin{cases}
C\colon\XX\to 2^{\XX^*}\colon
(x,y,v^*)\mapsto(-s^*+Ax)\times By\times\{r\}\\
S\colon\XX\to\XX^*\colon
(x,y,v^*)\mapsto(L^*v^*,-v^*,-Lx+y).
\end{cases}
\end{equation}
It follows from the maximal monotonicity of $A$ and $B$ that 
$C$ is maximally monotone. On the other hand, 
$S$ is linear and bounded, and 
\begin{equation}
\label{e:221}
\big(\forall(x,y,v^*)\in\HH\big)\quad
\Pair{(x,y,v^*)}{S(x,y,v^*)}=
\pair{x}{L^*v^*}-\pair{y}{v^*}+\pair{y-Lx}{v^*}=0. 
\end{equation}
Thus, we derive from \cite[Section~17]{Simo08} that 
$S$ is maximally monotone with 
$\dom S=\HH$. In turn, \cite[Theorem~24.1(a)]{Simo08}
asserts that $M=C+S$ is maximally monotone.

\ref{l:5498ii}:
We deduce from \eqref{e:1571} that $s^*\in Ax+L^*v^*$, 
$v^*\in By$, and $y=Lx-r$; hence $v^*\in B(Lx-r)$.
Consequently, $s^*-L^*v^*\in Ax$ and $Lx-r\in B^{-1}v^*$,
which yields $(x,v^*)\in Z$. Finally, \ref{l:5498i-}
entails that $x\in\mathscr{P}$ and $v^*\in\mathscr{D}$.

\ref{l:5498iii}:
By \cite[Proposition~2.1(i)(c)]{Joca16},
$\mathscr{P}\neq\emp$ $\Leftrightarrow$
$\mathscr{D}\neq\emp$ $\Leftrightarrow$
$Z\neq\emp$. In addition, in view of \ref{l:5498ii},
$\zer M\neq\emp$ $\Rightarrow$ $Z\neq\emp$.
Suppose that $(x,v^*)\in Z$ and set $y=Lx-r$. Then 
$y=Lx-r\in B^{-1}v^*$ and $s^*\in Ax+L^*v^*$.
Hence $0\in By-v^*$ and $0\in -s^*+Ax+L^*v^*$.
Altogether, $0\in(-s^*+Ax+L^*v^*)\times(By-v^*)
\times\{r-Lx+y\}=M(x,y,v^*)$, 
i.e., $(x,y,v^*)\in\zer M$.
\end{proof}

Now suppose that $\XX$ is a real Hilbert space with scalar product
$\scal{\cdot}{\cdot}$. An operator $T\colon\HH\to\HH$
is nonexpansive if it is $1$-Lipschitzian,
$\alpha$-averaged with $\alpha\in\zeroun$ if
$\Id+(1/\alpha)(T-\Id)$ is nonexpansive, firmly nonexpansive if it
is $1/2$-averaged, and $\beta$-cocoercive with
$\beta\in\RPP$ if $\beta T$ is firmly nonexpansive. 
Averaged operators were introduced in \cite{Bail78}.
The projection operator onto a nonempty closed 
convex subset $C$ of $\HH$ is denoted by $\proj_C$. 
The resolvent of $M\colon\XX\to 2^{\XX}$ is $J_M=(\Id+M)^{-1}$. 

\section{Warped resolvents}
\label{sec:3}
We provide illustrations of Definition~\ref{d:wr} and then
study the properties of warped resolvents.

Our first example is the warped resolvent of a subdifferential. 
This leads to the following notion, which extends Moreau's
classical proximity operator in Hilbert spaces \cite{Mor62b}.

\begin{example}[Warped proximity operator]
\label{d:wpx}
Let $D$ be a nonempty subset of $\HH$, let $K\colon D\to\HH^*$, 
and let $\varphi\colon\HH\to\RX$ be a proper lower semicontinuous
convex function such that 
$\ran K\subset\ran(K+\partial\varphi)$ and $K+\partial\varphi$ 
is injective. The warped proximity operator of $\varphi$ with
kernel $K$ is $\prox_\varphi^K=(K+\partial\varphi)^{-1}\circ K$.
It is characterized by the variational inequality
\begin{equation}
\label{e:wmj}
\big(\forall (x,p)\in\HH\times\HH\big)\quad
p=\prox_\varphi^Kx\;\;\Leftrightarrow\;\;
(\forall y\in\HH)\quad\pair{y-p}{Kx-Kp}+\varphi(p)\leq\varphi(y).
\end{equation}
\end{example}

In particular, in the case of normal cones, we arrive at 
the following definition (see Figure~\ref{fig:wpj}).

\begin{example}[Warped projection operator]
\label{ex:wpj}
Let $D$ be a nonempty subset of $\HH$, let $K\colon D\to\HH^*$, and
let $C$ be a nonempty closed convex subset of $\HH$ with normal
cone operator $N_C$ such that 
$\ran K\subset\ran(K+N_C)$ and $K+N_C$ is injective. 
The warped projection operator onto $C$ with kernel $K$ is 
$\proj_C^K=(K+N_C)^{-1}\circ K$. It is characterized by 
\begin{equation}
\label{e:wpj}
\big(\forall (x,p)\in\HH\times\HH\big)\quad
p=\proj_C^Kx\;\;\Leftrightarrow\;\;
\big[\,p\in C\quad\text{and}\;\;
(\forall y\in C)\quad\pair{y-p}{Kx-Kp}\leq 0\,\big].
\end{equation}
\end{example}

\begin{figure}[ht!]
\scalebox{1.3} 
{
\begin{pspicture}(-6.5,-2.8)(6.0,3.0)
\psline[linewidth=0.03cm,linecolor=dgreen,arrowsize=0.04cm 4.0,%
arrowlength=1.4,arrowinset=0.4]{->}(0.0,2.75)(0,1.05)
\psline[linewidth=0.03cm,linecolor=dgreen,arrowsize=0.04cm 4.0,%
arrowlength=1.4,arrowinset=0.4]{->}(-2.5,-2.4)(-0.737,-0.737)
\psline[linewidth=0.03cm,linecolor=dgreen,arrowsize=0.04cm 4.0,%
arrowlength=1.4,arrowinset=0.4]{->}(2.5,-2.5)(0.737,-0.737)
\psline[linewidth=0.03cm,linecolor=red,arrowsize=0.04cm 4.0,%
arrowlength=1.4,arrowinset=0.4]{->}(0.707,-2.7)(0.707,-0.757)
\pscircle[linewidth=0.03cm,fillstyle=solid,fillcolor=color2b,%
linecolor=color2b](0,0){1.00}
\rput(0.0,0.8){\tiny $p_1$}
\rput(-0.53,-0.53){\tiny $p_2$}
\rput(0.53,-0.53){\tiny $p_3$}
\rput(0.0,1.0){\color{blue}\footnotesize $\bullet$}
\rput(0.707,-0.707){\color{blue}\footnotesize $\bullet$}
\rput(-0.707,-0.707){\color{blue}\footnotesize $\bullet$}
\psplot[plotpoints=800,algebraic,arrows=<-,arrowsize=0.15cm,%
linewidth=0.03cm,linecolor=red]{0.07}{1.43}{0.5*x^3+0.2*x+1}
\psplot[plotpoints=800,algebraic,arrows=<-,arrowsize=0.15cm,%
linewidth=0.03cm,linecolor=red]{-0.767}{-2.2}%
{0.25*x^3-0.4*x-51*1.4142/80}
\end{pspicture}
}
\caption{Warped projections onto the closed unit ball $C$ centered 
at the origin in the Euclidean plane. Sets of points projecting onto 
$p_1$, $p_2$, and $p_3$ for the kernels $K_1=\Id$ (in green) and 
$K_2\colon(\xi_1,\xi_2)\mapsto(\xi_1^3/2+\xi_1/5-\xi_2,
\xi_1+\xi_2)$ (in red). Note that $K_2$ is not a gradient.
}
\label{fig:wpj}
\end{figure}
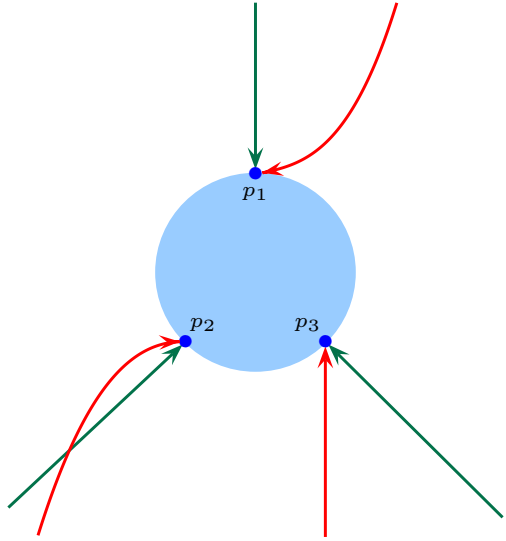 

\begin{example}
Suppose that $\XX$ is strictly convex,
let $M\colon\XX\to 2^{\XX^*}$ be maximally monotone, and let 
$K$ be the normalized duality mapping of $\XX$. Then $J_M^K$ is
a well-defined warped resolvent which was introduced in
\cite{Kass85}.
\end{example}

\begin{example}
Let $M\colon\XX\to 2^{\XX^*}$ be maximally monotone such that 
$\zer M\neq\emp$, let $f\colon\XX\to\RX$ be a Legendre function
\cite{Sico03} such that $\dom M\subset\intdom f$, and set 
$K=\nabla f$. Then it follows from
\cite[Corollary~3.14(ii)]{Sico03} that $J_M^K$ is a well-defined
warped resolvent, called the $D$-resolvent of $M$ in
\cite{Sico03}.
\end{example}

\begin{example}
Let $M\colon\XX\to 2^{\XX^*}$ be maximally monotone and let
$K\colon\XX\to\XX^*$ be strictly monotone, surjective, and
$3^*$ monotone in the sense that 
\cite[Definition~32.40(c)]{Zei90B}
\begin{equation}
(\forall x\in\dom M)(\forall x^*\in\ran M)\quad
\sup_{(y,y^*)\in\gra M}\pair{x-y}{y^*-x^*}<\pinf.
\end{equation}
Then it follows from \cite[Theorem~2.3]{Baus10}
that $J_M^K$ is a well-defined warped resolvent, called 
the $K$-resolvent of $M$ in \cite{Baus10}.
\end{example}

\begin{example}
Let $A\colon\XX\to 2^{\XX^*}$ and $B\colon\XX\to 2^{\XX^*}$
be maximally monotone, and let $f\colon\XX\to\RX$
be a proper lower semicontinuous convex function which is
essentially smooth \cite{Sico03}.
Suppose that $D=(\intdom f)\cap\dom A$ is a nonempty subset
of $\intdom B$, that $B$ is single-valued on $\intdom B$,
that $\nabla f$ is strictly monotone on $D$,
and that $(\nabla f-B)(D)\subset\ran(\nabla f+A)$.
Set $M=A+B$ and
$K\colon D\to\XX^*\colon x\mapsto\nabla f(x)-Bx$.
Then the warped resolvent $J_{M}^K$ is well defined and coincides
with the Bregman forward-backward operator $(\nabla
f+A)^{-1}\circ(\nabla f-B)$ investigated in \cite{Pape19}, where it
is shown to capture a construction found in \cite{Rena97}.
\end{example}

\begin{example}
Consider the setting of Lemma~\ref{l:5498}. For simplicity
(more general kernels can be considered), take
$s^*=0$, $r=0$, and assume that $\YY$ and $\ZZ^*$ are strictly 
convex, with normalized duality mapping $K_\YY$ and $K_{\ZZ^*}$. 
As seen in Lemma~\ref{l:5498}\ref{l:5498i-}, 
finding a zero of the Kuhn--Tucker operator
$U\colon\YY\times\ZZ^*\to 2^{\YY^*\times\ZZ}
\colon(x,v^*)\mapsto(Ax+L^*v^*)\times(B^{-1}v^*-Lx)$ provides a
solution to the primal-dual problem \eqref{e:p4}--\eqref{e:d4}.
Now set $K\colon(x,v^*)\mapsto(K_\YY x-L^*v^*,Lx+K_{\ZZ^*}v^*)$.
Then the warped resolvent $J_{U}^K$ is well defined and 
\begin{equation}
J_{U}^K\colon(x,v^*)\mapsto\big((K_\YY+A)^{-1}(K_\YY x-L^*v^*),
(K_{\ZZ^*}+B^{-1})^{-1}(Lx+K_{\ZZ^*}v^*)\big).
\end{equation}
For instance, in a Hilbertian setting, 
$J_{U}^K\colon(x,v^*)\mapsto(J_A(x-L^*v^*),J_{B^{-1}}(Lx+v^*))$, 
whereas $J_U$ is intractable; note also that the kernel 
$K$ is a non-Hermitian bounded linear operator.
\end{example}

Further examples will appear in Section~\ref{sec:5}. Let us turn
our attention to the properties of warped resolvents.

\begin{proposition}[viability]
\label{p:19}
Let $D$ be a nonempty subset of $\HH$, let $K\colon D\to\HH^*$, 
and let $M\colon\HH\to 2^{\HH^*}$ be such that 
$\ran K\subset\ran(K+M)$ and $K+M$ is injective. 
Then $J_M^K\colon D\to D$. 
\end{proposition}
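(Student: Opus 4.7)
The statement says that $J_M^K$ is a well-defined single-valued operator on $D$ whose values stay in $D$. My plan is to unpack the three things hidden in this claim and verify each directly from the two hypotheses, namely that $\ran K\subset\ran(K+M)$ and that $K+M$ is injective in the sense of Definition~\ref{d:inj}.

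First I would fix an arbitrary $x\in D$ and show that $J_M^K x$ is nonempty. Since $K$ is defined on $D$, the point $Kx$ lies in $\ran K$, and the first hypothesis places it in $\ran(K+M)$. Hence there exists $p$ with $Kx\in(K+M)p$, i.e., $p\in(K+M)^{-1}(Kx)=J_M^K x$. Second, I would establish uniqueness of $p$. If $p_1,p_2\in(K+M)^{-1}(Kx)$, then $Kx\in(K+M)p_1\cap(K+M)p_2$, so $(K+M)p_1\cap(K+M)p_2\neq\emp$, and the injectivity assumption forces $p_1=p_2$.

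The final point—membership of the output in $D$—is the one that requires a little care but is essentially bookkeeping about domains. The key observation is that $K+M$ is only meaningfully defined where both summands make sense, so as a set-valued operator from $\HH$ to $2^{\HH^*}$ its effective domain is $D\cap\dom M$. Consequently, any $p$ with $(K+M)p\neq\emp$ automatically satisfies $p\in D$. Applied to the $p$ produced above, this gives $J_M^K x=\{p\}\subset D$, which is what we wanted.

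I do not foresee a real obstacle here: once the hypotheses are interpreted correctly (especially that the notation $\ran(K+M)$ refers to the range over $D\cap\dom M$, not over all of $\HH$), the conclusion follows by chasing definitions. The only subtlety worth flagging in the write-up is the distinction between $J_M^K x$ being well defined as a singleton (which uses injectivity) and having its unique element lying in $D$ (which is a consequence of the domain of $K$ being $D$ in the first place).
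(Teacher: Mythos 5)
Your proposal is correct and follows essentially the same route as the paper: existence of $J_M^Kx$ from $\ran K\subset\ran(K+M)$, uniqueness from the injectivity of $K+M$ (exactly the paper's argument that $(K+M)^{-1}$ is at most single-valued), and membership in $D$ from $\dom(K+M)\subset\dom K=D$, which is the paper's observation that $\ran J_M^K\subset\ran(K+M)^{-1}=\dom(K+M)\subset D$. No gaps.
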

\begin{proof}
By assumption, $\dom J_M^K=\dom((K+M)^{-1}\circ K)
=\menge{x\in\dom K}{Kx\in\dom(K+M)^{-1}}
=\menge{x\in D}{Kx\in\ran(K+M)}=D$. Next, observe that
\begin{equation}
\label{e:v}
\ran J_M^K=\ran\big((K+M)^{-1}\circ K\big)
\subset\ran(K+M)^{-1}=\dom (K+M)\subset\dom K=D.
\end{equation}
Finally, to show that $(K+M)^{-1}$ is at most
single-valued, suppose that $(x^*,x_1)\in\gra(K+M)^{-1}$
and $(x^*,x_2)\in\gra(K+M)^{-1}$.
Then $\{x^*\}\subset (K+M)x_1\cap(K+M)x_2$ and,
since $K+M$ is injective, it follows that $x_1=x_2$.
\end{proof}

Sufficient conditions that guarantee that warped resolvents are
well defined are made explicit below.

\begin{proposition}
\label{p:20}
Let $D$ be a nonempty subset of $\HH$, let $K\colon D\to\HH^*$, 
and let $M\colon\HH\to 2^{\HH^*}$. Then the following hold:
\begin{enumerate}
\item
\label{p:20ii}
Suppose that one of the following is satisfied:
\begin{enumerate}[label={\rm[\alph*]}]
\item
\label{p:20iia-}
$K+M$ is surjective.
\item
\label{p:20iia}
$K+M$ is maximally monotone and 
$D\cap\dom M$ is bounded.
\item
\label{p:20iib}
$K+M$ is maximally monotone,
$K+M$ is uniformly monotone with modulus $\phi$,
and $\phi(t)/t\to\pinf$ as $t\to\pinf$.
\item
\label{p:20iic}
$K+M$ is maximally monotone and 
strongly monotone.
\item
\label{p:20iie}
$M$ is maximally monotone, $D=\HH$, and $K$ is maximally monotone, 
strictly monotone, $3^*$ monotone, and surjective.
\item
\label{p:20iid}
$K$ is maximally monotone and there exists a lower semicontinuous 
coercive convex function $\varphi\colon\HH\to\RR$
such that $M=\partial\varphi$.
\end{enumerate}
Then $\ran K\subset\ran(K+M)$.
\item
\label{p:20i}
Suppose that one of the following is satisfied:
\begin{enumerate}[label={\rm[\alph*]}]
\item
\label{p:20ia}
$K+M$ is strictly monotone.
\item
\label{p:20ib}
$M$ is monotone and $K$ is strictly monotone on $\dom M$. 
\item
\label{p:20ic}
$K$ is monotone and $M$ is strictly monotone.
\item
\label{p:20id}
$-(K+M)$ is strictly monotone.
\end{enumerate}
Then $K+M$ is injective.
\end{enumerate}
\end{proposition}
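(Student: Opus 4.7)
The plan is to handle the two parts independently since they address unrelated properties (range containment versus injectivity), and then within each part to dispatch the sufficient conditions essentially one at a time, grouping those that share a common mechanism.

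For part \ref{p:20ii}, the strategy is to reduce most subcases to the classical fact that a maximally monotone operator $A\colon\HH\to 2^{\HH^*}$ on a reflexive Banach space which is coercive in the sense that $\inf\menge{\pair{x}{x^*}/\|x\|}{x^*\in Ax}\to+\infty$ as $\|x\|\to+\infty$ is surjective. Case \ref{p:20iia-} is immediate. For \ref{p:20iia}, $\dom(K+M)\subset D\cap\dom M$ is bounded, so the coercivity condition holds vacuously and $K+M$ is surjective. For \ref{p:20iib}, I would fix $(y,y^*)\in\gra(K+M)$, invoke uniform monotonicity to get $\pair{x-y}{x^*-y^*}\geq\phi(\|x-y\|)$ for all $(x,x^*)\in\gra(K+M)$, and then divide by $\|x-y\|$ and use $\phi(t)/t\to+\infty$ to extract coercivity. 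Case \ref{p:20iic} is the instance $\phi=\alpha|\mute|^2$ of \ref{p:20iib}. Case \ref{p:20iie} is exactly the content of \cite[Theorem~2.3]{Baus10} cited in the earlier example and can be invoked directly.

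Case \ref{p:20iid} is the one that will require the most care. Since $\dom\varphi=\HH$, Rockafellar's sum theorem yields that $K+\partial\varphi$ is maximally monotone. The hard part is surjectivity: given $x^*\in\HH^*$, the problem $x^*\in Kx+\partial\varphi(x)$ amounts to $-Kx\in\partial\psi(x)$ with $\psi=\varphi-\pair{\cdot}{x^*}$, which is still lower semicontinuous, convex, and coercive. I would use the subgradient inequality $\pair{x}{v^*}\geq\varphi(x)-\varphi(0)$ for $v^*\in\partial\varphi(x)$, combine with a monotonicity bound on the $K$ component relative to a fixed reference point of $\gra K$, and deduce coercivity of $K+\partial\varphi$, yielding surjectivity.

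For part \ref{p:20i}, the proof is uniformly straightforward: in each case I suppose $(K+M)x_1\cap(K+M)x_2\neq\emp$, pick $z^*$ in the intersection, and derive $x_1=x_2$. For \ref{p:20ia}, $(x_1,z^*),(x_2,z^*)\in\gra(K+M)$ forces $\pair{x_1-x_2}{z^*-z^*}=0$, and strict monotonicity gives $x_1=x_2$. Case \ref{p:20id} proceeds identically via $-(K+M)$. For \ref{p:20ib}, write $z^*=Kx_i+u_i^*$ with $u_i^*\in Mx_i$; subtracting yields $Kx_1-Kx_2=u_2^*-u_1^*$, hence $\pair{x_1-x_2}{Kx_1-Kx_2}=-\pair{x_1-x_2}{u_1^*-u_2^*}\leq 0$ by monotonicity of $M$, so strict monotonicity of $K$ on $\dom M$ forces $x_1=x_2$. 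Case \ref{p:20ic} is symmetric, exchanging the roles of $K$ and $M$. The principal obstacle throughout is \ref{p:20iid}; everything else is routine bookkeeping.
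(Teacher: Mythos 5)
Your part \ref{p:20i} and cases [a]--[e] of part \ref{p:20ii} are correct and follow essentially the paper's route: the paper delegates the surjectivity assertions behind [b]--[e] to citations (Zeidler's theorem on maximally monotone operators with bounded domain, a coercivity-implies-surjectivity criterion, and \cite[Theorem~2.3]{Baus10}) where you sketch the standard proofs, and for part \ref{p:20i} it reduces [b] and [c] to [a] by noting that $K+M$ is then strictly monotone, which is equivalent to your direct computation.

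The gap is in case [f] of part \ref{p:20ii} (condition \ref{p:20iid}). Two of your claims there are false. First, $\psi=\varphi-\pair{\cdot}{x^*}$ need not be coercive: with $\varphi=\|\cdot\|$ and $\|x^*\|>1$, $\psi$ is unbounded below along the ray through $x^*$. Coercivity in the sense assumed here ($\varphi(x)\to\pinf$ as $\|x\|\to\pinf$) is not preserved under subtraction of a linear functional; only supercoercivity is. Second, and more fundamentally, $K+\partial\varphi$ need not be surjective under the stated hypotheses, so no argument can deliver surjectivity: take $\HH=\RR$, $D=\RR$, $K=0$, and $\varphi=|\cdot|$; then $K$ is maximally monotone and $\varphi$ is real-valued, lower semicontinuous, convex, and coercive, yet $\ran(K+\partial\varphi)=[-1,1]\neq\RR$. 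The asserted inclusion $\ran K\subset\ran(K+M)$ still holds in that example since $\ran K=\{0\}$, which is precisely the point: in case [f] one must prove the weaker inclusion directly rather than through surjectivity. The paper does this by fixing $z\in D$, translating to $B=(K+\partial\varphi)(\mute+z)-Kz$, using the subgradient inequality together with coercivity of $\varphi$ and monotonicity of $K$ to obtain the sign condition $\pair{x}{x^*}\geq 0$ for all $(x,x^*)\in\gra B$ with $\|x\|\geq\rho$, and then invoking \cite[Proposition~2]{Rock70}, which guarantees that a maximally monotone operator satisfying this weak sign condition has $0$ in its range; this yields $Kz\in\ran(K+\partial\varphi)$. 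Your ingredients (the subgradient inequality and a monotonicity bound on the $K$ component) are the right ones, but they must feed into a zero-existence result under the weak sign condition, not into a coercivity-implies-surjectivity theorem.
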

\begin{proof}
Set $A=K+M$.

\ref{p:20ii}: Item \ref{p:20iia-} is clear. 
We prove the remaining ones as follows.

\ref{p:20iia}:
It follows from \cite[Theorem~32.G]{Zei90B} 
that $\ran A=\HH\supset\ran K$.

\ref{p:20iib}\&\ref{p:20iic}:
Since \cite[Lemma~2.7(ii)]{Joca16} and
\cite[Corollary~32.35]{Zei90B} assert that $A$ is
surjective, the claim follows from \ref{p:20ii}\ref{p:20iia-}.

\ref{p:20iie}: See \cite[Theorem~2.3]{Baus10}.

\ref{p:20iid}:
Take $z\in D$ and set $B=A(\mute+z)-Kz$.
By coercivity of $\varphi$, there exists $\rho\in\RPP$ such that
\begin{equation}
\label{e:0055}
(\forall x\in\HH)\quad
\|x\|\geq\rho
\quad\Rightarrow\quad
\inf\pair{x}{\partial\varphi(x+z)}
\geq\varphi(x+z)-\varphi(z)\geq 0.
\end{equation}
Now take $(x,x^*)\in\gra B$ and suppose that $\|x\|\geq\rho$.
Then $x^*+Kz-K(x+z)\in\partial\varphi(x+z)$ and it follows from
\eqref{e:0055} and the monotonicity of $K$ that
\begin{equation}
0\leq\pair{x}{x^*+Kz-K(x+z)}
=\pair{x}{x^*}-\pair{(x+z)-z}{K(x+z)-Kz}
\leq\pair{x}{x^*}.
\end{equation}
On the other hand, since $\dom\partial\varphi=\HH$ 
\cite[Theorems~2.2.20(b) and 2.4.12]{Zali02}, 
$A$ is maximally monotone \cite[Theorem~24.1(a)]{Simo08}, and
so is $B$. Altogether, \cite[Proposition~2]{Rock70} asserts that
there exists $\overline{x}\in\HH$ such that
$0\in B\overline{x}$. Consequently,
$Kz\in A(\overline{x}+z)\subset\ran(K+M)$.

\ref{p:20i}: We need to prove only \ref{p:20ia} since 
\ref{p:20ib} and \ref{p:20ic} are special cases of it, and 
\ref{p:20id} is similar. To this end, 
let $(x_1,x_2)\in\HH^2$ and suppose that $Ax_1\cap Ax_2\neq\emp$.
We must show that $x_1=x_2$. 
Take $x^*\in Ax_1\cap Ax_2$.
Then $(x_1,x^*)$ and $(x_2,x^*)$ lie in $\gra A$.
In turn, since $A$ is strictly monotone and
$\pair{x_1-x_2}{x^*-x^*}=0$, we obtain $x_1=x_2$.
\end{proof}

\begin{proposition}
\label{p:21}
Let $M\colon\HH\to 2^{\HH^*}$, let $\gamma\in\RPP$, and let
$K\colon\HH\to\HH^*$ be such that $\ran K\subset\ran(K+\gamma M)$ and
$K+\gamma M$ is injective. Then the following hold:
\begin{enumerate}
\item
\label{p:21ii}
$\Fix J_{\gamma M}^K=\zer M$.
\item
\label{p:21iii}
Let $x\in\HH$ and $p\in\HH$. Then $p=J_{\gamma M}^Kx$ 
$\Leftrightarrow$ $(p,\gamma^{-1}(Kx-Kp))\in\gra M$.
\item
\label{p:21iv}
Suppose that $M$ is monotone.
Let $x\in\HH$ and $y\in\HH$, and set
$p=J_{\gamma M}^Kx$ and 
$q=J_{\gamma M}^Ky$. Then
$\pair{p-q}{Kx-Ky}\geq\pair{p-q}{Kp-Kq}$.
\item
\label{p:21vi}
Suppose that $M$ is monotone, 
that $K$ is uniformly continuous and $\phi$-uniformly 
monotone, and that $\psi\colon t\mapsto\phi(t)/t$
is real-valued on $\left]0,\xi\right[$ for some
$\xi\in\RPP$ and strictly increasing.
Then $J_{\gamma M}^K$ is uniformly continuous.
\item
\label{p:21vii}
Suppose that $M$ is monotone and that $K$ is
$\beta$-Lipschitzian and $\alpha$-strongly monotone
for some $\alpha\in\RPP$ and $\beta\in\RPP$.
Then $J_{\gamma M}^K$ is $(\beta/\alpha)$-Lipschitzian.
\item
\label{p:21v}
Suppose that $M$ is monotone.
Let $x\in\HH$, and set $y=J_{\gamma M}^Kx$ and 
$y^*=\gamma^{-1}(Kx-Ky)$. Then
$\zer M\subset\menge{z\in\HH}{\pair{z-y}{y^*}\leq 0}$.
\end{enumerate}
\end{proposition}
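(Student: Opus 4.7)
The plan is to exploit the characterization proved in part~\ref{p:21iii} of the same proposition, which identifies $(y,y^*)$ as a point of $\gra M$, and then to pair it against an arbitrary zero of $M$ via monotonicity.

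More precisely, I would first invoke Proposition~\ref{p:21}\ref{p:21iii}: since $y=J_{\gamma M}^Kx$ and $y^*=\gamma^{-1}(Kx-Ky)$, the equivalence in \ref{p:21iii} yields $(y,y^*)\in\gra M$. Next, let $z\in\zer M$; then $(z,0)\in\gra M$. Applying the monotonicity of $M$ to the pairs $(y,y^*)$ and $(z,0)$ gives
\begin{equation}
\pair{y-z}{y^*-0}\geq 0,
\end{equation}
that is, $\pair{z-y}{y^*}\leq 0$. Since $z\in\zer M$ was arbitrary, this is exactly the asserted inclusion $\zer M\subset\menge{z\in\HH}{\pair{z-y}{y^*}\leq 0}$.

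There is essentially no obstacle here: the content of the statement is just the standard observation that a monotone operator separates its graph points from its zeros by a halfspace, and part~\ref{p:21iii} has already translated the warped-resolvent identity into a graph membership, so monotonicity closes the argument in one line. The only thing to be careful about is that the proof does not use (and must not assume) any continuity or single-valuedness of $M$ at $z$; the one-sided inequality comes directly from the definition of monotonicity applied to two admissible graph pairs.
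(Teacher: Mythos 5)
Your argument for the final item, \ref{p:21v}, is correct and is in fact the paper's own proof of that item, word for word in substance: item \ref{p:21iii} places $(y,y^*)$ in $\gra M$, the pair $(z,0)$ lies in $\gra M$ for every $z\in\zer M$, and monotonicity gives $\pair{y-z}{y^*}\geq 0$, i.e.\ $\pair{z-y}{y^*}\leq 0$. Your remark that no continuity or single-valuedness of $M$ is needed is also accurate.

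The gap is one of coverage: the statement is a six-part proposition, and your proposal proves only its last assertion. Items \ref{p:21ii}, \ref{p:21iii}, \ref{p:21iv}, \ref{p:21vi}, and \ref{p:21vii} are left entirely unaddressed. This matters all the more because you \emph{invoke} \ref{p:21iii} as an input; in a self-contained proof of the proposition it must be established first (it follows by unwinding $p=J_{\gamma M}^Kx=(K+\gamma M)^{-1}(Kx)$ into $Kx-Kp\in\gamma Mp$). From there, \ref{p:21ii} follows together with Proposition~\ref{p:19} ($x\in\zer M\Leftrightarrow Kx\in Kx+\gamma Mx\Leftrightarrow x=J_{\gamma M}^Kx$), \ref{p:21iv} follows from \ref{p:21iii} applied at $x$ and $y$ plus the monotonicity of $M$, and \ref{p:21vi} and \ref{p:21vii} follow from the inequality in \ref{p:21iv} sandwiched between the uniform (resp.\ strong) monotonicity of $K$ on the left and the Cauchy--Schwarz inequality together with the uniform continuity (resp.\ Lipschitz bound) of $K$ on the right. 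As written, the proposal is a correct proof of one sixth of the statement, not of the statement.
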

\begin{proof}
\ref{p:21ii}:
We derive from Proposition~\ref{p:19} that
$(\forall x\in\HH)$
$x\in\zer M$ $\Leftrightarrow$ $Kx\in Kx+\gamma Mx$
$\Leftrightarrow$ $x=J^K_{\gamma M}x$
$\Leftrightarrow$ $x\in\Fix J^K_{\gamma M}$.

\ref{p:21iii}:
We have $p=J^K_{\gamma M}x$
$\Leftrightarrow$ $p=(K+\gamma M)^{-1}(Kx)$
$\Leftrightarrow$ $Kx\in Kp+\gamma Mp$
$\Leftrightarrow$ $Kx-Kp\in\gamma Mp$
$\Leftrightarrow$ $(p,\gamma^{-1}(Kx-Kp))\in\gra M$.

\ref{p:21iv}: This follows from \ref{p:21iii} and the monotonicity
of $M$.

\ref{p:21vi}: Let $x$ and $y$ be in $\HH$, and set
$p=J_{\gamma M}^Kx$ and $q=J_{\gamma M}^Ky$.
Then we deduce from \ref{p:21iv} that
\begin{equation}
\label{e:9u7} 
\phi(\|p-q\|)\leq\pair{p-q}{Kp-Kq}
\leq\pair{p-q}{Kx-Ky}\leq\|p-q\|\,\|Kx-Ky\|.
\end{equation}
Now fix $\varepsilon\in\left]0,\xi\right[$ and let 
$\eta\in\left]0,\psi(\varepsilon)\right]$.
By uniform continuity of $K$, there exists 
$\delta\in\RPP$ such that 
$\|x-y\|\leq\delta$ $\Rightarrow$ $\|Kx-Ky\|\leq\eta$.
Without loss of generality, suppose that $p\neq q$.
Then, if $\|x-y\|\leq\delta$, we derive from \eqref{e:9u7} that
$\psi(\|p-q\|)\leq\|Kx-Ky\|\leq\eta\leq\psi(\varepsilon)$.
Consequently, since $\psi$ is strictly increasing,
$\|p-q\|\leq\varepsilon$.

\ref{p:21vii}: Let $x$ and $y$ be 
in $\HH$ and set
$p=J_{\gamma M}^Kx$ and $q=J_{\gamma M}^Ky$. Then
we deduce from \ref{p:21iv} that
\begin{equation}
\label{e:9u8} 
\alpha\|p-q\|^2\leq\pair{p-q}{Kp-Kq}
\leq\pair{p-q}{Kx-Ky}\leq\|p-q\|\,\|Kx-Ky\|
\leq\beta\|p-q\|\,\|x-y\|.
\end{equation}
In turn, $\|p-q\|\leq(\beta/\alpha)\|x-y\|$.

\ref{p:21v}: Suppose that $z\in\zer M$. Then $(z,0)\in\gra M$. On
the other hand, we derive from \ref{p:21iii} that
$(y,y^*)\in\gra M$. Hence, by monotonicity of $M$, 
$\pair{y-z}{y^*}\geq 0$.
\end{proof}

In Hilbert spaces, standard resolvents are firmly nonexpansive,
hence $1/2$-averaged. A related property for warped resolvents is
the following.

\begin{proposition}
\label{p:22}
Suppose that $\XX$ is a Hilbert space.
Let $M\colon\HH\to 2^{\HH}$ be maximally monotone
and let $K\colon\HH\to\HH$ be averaged with constant
$\alpha\in\zeroun$. Suppose that $K+M$ is $1$-strongly monotone.
Then $J^K_M$ is averaged with constant $1/(2-\alpha)$.
\end{proposition}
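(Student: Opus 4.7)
The plan is to invoke the characterization that $T\colon\HH\to\HH$ is $\beta$-averaged, with $\beta\in\zeroun$, if and only if
\[
(\forall x\in\HH)(\forall y\in\HH)\quad
\|Tx-Ty\|^2+\frac{1-\beta}{\beta}\big\|(x-Tx)-(y-Ty)\big\|^2\leq\|x-y\|^2.
\]
Substituting $\beta=1/(2-\alpha)$ collapses $(1-\beta)/\beta$ to $1-\alpha$, so the entire task reduces to proving
\[
\|p-q\|^2+(1-\alpha)\big\|(x-p)-(y-q)\big\|^2\leq\|x-y\|^2,
\]
where $p=J^K_Mx$ and $q=J^K_My$ for arbitrary $x,y\in\HH$. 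Note that $1$-strong monotonicity of $K+M$ entails strict monotonicity, hence injectivity of $K+M$ via Proposition~\ref{p:20}\ref{p:20i}\ref{p:20ia}, so $J^K_M$ is at least single-valued wherever it is defined.

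I would then assemble two source inequalities. Since $(p,Kx)$ and $(q,Ky)$ belong to $\gra(K+M)$, the $1$-strong monotonicity of $K+M$ yields $\scal{p-q}{Kx-Ky}\geq\|p-q\|^2$; expanding $\|Kx-Ky\|^2=\|(p-q)+((Kx-p)-(Ky-q))\|^2$ and bounding the cross term by this inequality turns it into
\[
\|p-q\|^2+\big\|(Kx-p)-(Ky-q)\big\|^2\leq\|Kx-Ky\|^2.
\]
On the other hand, the $\alpha$-averagedness of $K$ is exactly
\[
\|Kx-Ky\|^2+\frac{1-\alpha}{\alpha}\big\|(x-Kx)-(y-Ky)\big\|^2\leq\|x-y\|^2.
\]
Chaining these gives
\[
\|p-q\|^2+\|u\|^2+\frac{1-\alpha}{\alpha}\|v\|^2\leq\|x-y\|^2,
\]
where I set $u=(Kx-p)-(Ky-q)$ and $v=(x-Kx)-(y-Ky)$, so that $u+v=(x-p)-(y-q)$.

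The only substantive remaining step is the weighted Young-type inequality
\[
(1-\alpha)\|u+v\|^2\leq\|u\|^2+\frac{1-\alpha}{\alpha}\|v\|^2,
\]
which, after multiplication by $\alpha$ and rearrangement, is nothing but the expansion of $\|\alpha u-(1-\alpha)v\|^2\geq 0$. Combined with the chained inequality above, this delivers the target estimate and hence the $1/(2-\alpha)$-averagedness of $J^K_M$. The main conceptual point, and the one I expect to require a moment's thought, is spotting the correct pair of weights $\alpha$ and $1-\alpha$ that makes this last estimate tight; everything else is a mechanical aggregation of the two source inequalities.
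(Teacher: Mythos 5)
Your estimate is correct and follows a genuinely different route from the paper. The paper never touches the averagedness inequality directly: it shows $2\Id+K$ is monotone and continuous (hence maximally monotone), deduces that $2\Id+K+M$ is maximally monotone and strongly monotone, hence surjective, invokes Minty's theorem to conclude that $K+M-\Id$ is maximally monotone, so that $(K+M)^{-1}$ is a resolvent and therefore $1/2$-averaged, and finally cites the composition theorem for averaged operators to get the constant $1/(2-\alpha)$. You instead re-derive the firm nonexpansiveness of $(K+M)^{-1}$ by hand from the $1$-strong monotonicity (your inequality $\|p-q\|^2+\|(Kx-p)-(Ky-q)\|^2\leq\|Kx-Ky\|^2$ is exactly that statement), and then carry out the composition estimate explicitly via the weighted inequality $(1-\alpha)\|u+v\|^2\leq\|u\|^2+\tfrac{1-\alpha}{\alpha}\|v\|^2$, which is indeed $\|\alpha u-(1-\alpha)v\|^2\geq 0$ after clearing denominators. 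This is more self-contained and makes transparent where the constant $1/(2-\alpha)$ comes from; all the algebra checks out.

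There is, however, one genuine gap: you never establish that $J^K_M$ is defined on all of $\HH$, i.e., that $\ran K\subset\ran(K+M)$. You address injectivity (single-valuedness of $(K+M)^{-1}$) via strict monotonicity, but your argument starts from ``let $p=J^K_Mx$'' without knowing such a $p$ exists. This is not a formality one can wave away: an averaged operator need not be monotone (e.g., $-\tfrac{1}{2}\Id$ is $\tfrac34$-averaged), so you cannot conclude that $K+M$ is maximally monotone by a direct appeal to the sum theorem, and hence cannot immediately invoke Proposition~\ref{p:20}\ref{p:20ii}\ref{p:20iic} to get surjectivity. This is precisely the point of the paper's $2\Id$ device: $2\Id+K$ is strongly monotone and continuous because $K$ is nonexpansive, which makes $2\Id+K+M$ maximally monotone and strongly monotone, hence surjective, and Minty's theorem then upgrades the monotone operator $K+M-\Id$ to a maximally monotone one, giving $\ran(K+M)=\HH$. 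Your proof needs this (or an equivalent) argument prepended; once it is in place, the rest stands.
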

\begin{proof}
Since $K$ is nonexpansive by virtue of
\cite[Remark~4.34(i)]{Livre1},
it follows from the Cauchy--Schwarz inequality that
\begin{align}
\label{e:3251}
(\forall x\in\HH)(\forall y\in\HH)\quad
\scal{x-y}{(2\Id+K)x-(2\Id+K)y}
&=2\|x-y\|^2+\scal{x-y}{Kx-Ky}
\nonumber\\
&\geq 2\|x-y\|^2-\|x-y\|^2
\nonumber\\
&=\|x-y\|^2
\end{align}
and therefore, by continuity of $2\Id+K$,
that $2\Id+K$ is maximally monotone \cite[Corollary~20.28]{Livre1}.
Thus, in the light of \cite[Corollary~25.5(i)]{Livre1},
$2\Id+K+M$ is maximally monotone.
In turn, since $2\Id+K+M$ is strongly monotone by \eqref{e:3251},
\cite[Proposition~22.11(ii)]{Livre1} entails that
$\ran(3\Id+K+M-\Id)=\ran(2\Id+K+M)=\HH$,
which yields $\ran(\Id+(K+M-\Id)/3)=\HH$.
Hence, by monotonicity of $K+M-\Id$ and
Minty's theorem \cite[Theorem~21.1]{Livre1},
we infer that $K+M-\Id$ is maximally monotone.
Thus, in view of \cite[Corollary~23.9]{Livre1},
$(K+M)^{-1}=(\Id+K+M-\Id)^{-1}$ is averaged with constant
$1/2$. Consequently, we infer from \cite[Proposition~4.44]{Livre1}
that $J^K_M=(K+M)^{-1}\circ K$ is averaged with constant
$1/(2-\alpha)$.
\end{proof}

\section{Warped proximal iterations}
\label{sec:4}
Throughout this section, $\HH$ is a real Hilbert space identified
with its dual. We start with an abstract principle for the basic 
problem of finding a zero of a maximally monotone operator. 

\begin{proposition}
\label{p:1}
Let $M\colon\HH\to 2^{\HH}$ be a maximally monotone operator
such that $Z=\zer M\neq\emp$, let $x_0\in\HH$, let 
$\varepsilon\in\zeroun$, let $(\lambda_n)_{n\in\NN}$ be a sequence
in $[\varepsilon,2-\varepsilon]$, and let 
$(y_n,y_n^*)_{n\in\NN}$ be a sequence in $\gra M$. Set
\begin{equation}
\label{e:fejer14}
(\forall n\in\NN)\quad
x_{n+1}=
\begin{cases}
x_n+\dfrac{\lambda_n\scal{y_n-x_n}{y_n^*}}
{\|y_n^*\|^2}\,y_n^*,&\text{if}\:\:\scal{y_n-x_n}{y_n^*}<0;\\
x_n,&\text{otherwise.}
\end{cases}
\end{equation}
Then the following hold:
\begin{enumerate}
\item
\label{p:1i}
$\sum_{n\in\NN}\|x_{n+1}-x_n\|^2<\pinf$.
\item
\label{p:1ii}
Suppose that every weak sequential cluster point of
$(x_n)_{n\in\NN}$ is in $Z$. Then $(x_n)_{n\in\NN}$ converges
weakly to a point in $Z$.
\end{enumerate}
\end{proposition}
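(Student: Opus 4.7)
The plan is to interpret \eqref{e:fejer14} as a relaxed projection of $x_n$ onto a closed half-space $H_n$ that contains $Z$, and then to invoke the standard Fej\'er-monotone machinery. For every $n\in\NN$, set $H_n=\menge{u\in\HH}{\scal{u-y_n}{y_n^*}\leq 0}$. Since $(y_n,y_n^*)\in\gra M$ and, for every $z\in Z$, $(z,0)\in\gra M$, monotonicity of $M$ yields $\scal{z-y_n}{y_n^*}\leq 0$; hence $Z\subset H_n$. A direct computation of $\proj_{H_n}x_n$ then recasts \eqref{e:fejer14} as $x_{n+1}=x_n+\lambda_n(\proj_{H_n}x_n-x_n)$, with $\lambda_n\in[\varepsilon,2-\varepsilon]$.

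The second step is the standard identity for relaxed projections onto a closed convex set, which gives for every $z\in H_n$:
\begin{equation}
\|x_{n+1}-z\|^2\leq\|x_n-z\|^2-\frac{2-\lambda_n}{\lambda_n}\|x_{n+1}-x_n\|^2.
\end{equation}
Applied with $z\in Z$ and using the bound $(2-\lambda_n)/\lambda_n\geq\varepsilon/(2-\varepsilon)$, this inequality exhibits $(x_n)_{n\in\NN}$ as Fej\'er monotone with respect to $Z$ and, upon telescoping, delivers assertion~\ref{p:1i}.

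Assertion~\ref{p:1ii} then follows from a standard Opial-type argument: Fej\'er monotonicity ensures both that $(\|x_n-z\|)_{n\in\NN}$ converges for every $z\in Z$ and that $(x_n)_{n\in\NN}$ is bounded; combined with the hypothesis that every weak sequential cluster point of $(x_n)_{n\in\NN}$ lies in $Z$, these facts force the weak sequential cluster point to be unique, so $(x_n)_{n\in\NN}$ converges weakly to a point in $Z$ (see, e.g., \cite{Livre1}).

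I expect no serious obstacle. Every step is a routine application of tools documented in \cite{Livre1}; the only delicate point is the sign bookkeeping in the identification of $\proj_{H_n}x_n$ and in the relaxed-projection inequality, which is mechanical.
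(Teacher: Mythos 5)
Your proposal is correct and follows essentially the same route as the paper: both identify \eqref{e:fejer14} as a relaxed projection onto the half-space $H_n=\menge{u\in\HH}{\scal{u-y_n}{y_n^*}\leq 0}$ containing $Z$, and then invoke the standard Fej\'er-monotonicity machinery (the paper simply cites \cite{Eoop01} for the quantitative inequality and the weak-convergence criterion that you spell out). The sign bookkeeping and the bound $(2-\lambda_n)/\lambda_n\geq\varepsilon/(2-\varepsilon)$ are correct as written.
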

\begin{proof}
By \cite[Proposition~23.39]{Livre1}, $Z$ is a
nonempty closed convex subset of $\XX$. Set $(\forall n\in\NN)$ 
$H_n=\menge{z\in\HH}{\scal{z-y_n}{y_n^*}\leq 0}$.
For every $z\in Z$ and every $n\in\NN$,
since $(z,0)$ and $(y_n,y_n^*)$ lie in $\gra M$,
the monotonicity of $M$ forces $\scal{y_n-z}{y_n^*}\geq 0$.
Thus $Z\subset\bigcap_{n\in\NN}H_n$.
In addition, \cite[Example~29.20]{Livre1} asserts that
\begin{equation}
(\forall n\in\NN)\quad\proj_{H_n}x_n=
\begin{cases}
x_n+\dfrac{\scal{y_n-x_n}{y_n^*}}{\|y_n^*\|^2}\,y_n^*,&
\text{if}\;\scal{y_n-x_n}{y_n^*}<0;\\
x_n,&\text{otherwise}.
\end{cases}
\end{equation}
Hence, we derive from \eqref{e:fejer14} that
\begin{equation}
\label{e:fejer13}
(\forall n\in\NN)\quad
x_{n+1}=x_n+\lambda_n(\proj_{H_n}x_n-x_n).
\end{equation}
Therefore \ref{p:1i} follows from \cite[Equation~(10)]{Eoop01} and
\ref{p:1ii} follows from \cite[Proposition~6i)]{Eoop01}.
\end{proof}

To implement the conceptual principle outlined in 
Proposition~\ref{p:1}, one is required to construct points 
in the graph of the underlying monotone operator.
Towards this end,
our strategy is to use Proposition~\ref{p:21}\ref{p:21iii}.
We shall then seamlessly obtain in Section~\ref{sec:5}
a broad class of algorithms to solve a variety of
monotone inclusions. It will be convenient to use the notation
\begin{equation}
\label{e:1826}
(\forall y^*\in\GG^*)\quad\norma{(y^*)}=
\begin{cases}
\dfrac{y^*}{\|y^*\|},&\text{if}\;y^*\neq 0;\\
0,&\text{if}\;y^*=0.
\end{cases}
\end{equation}

Our first method employs, at iteration $n$, a warped resolvent 
based on a different kernel, and this warped resolvent
is applied at a point $\widetilde{x}_n$ that may not be the current
iterate $x_n$.

\begin{theorem}
\label{t:1}
Let $M\colon\HH\to 2^{\HH}$ be a maximally monotone
operator such that $Z=\zer M\neq\emp$, let $x_0\in\HH$,
let $\varepsilon\in\zeroun$, let $(\lambda_n)_{n\in\NN}$ be a
sequence in $[\varepsilon,2-\varepsilon]$, and
let $(\gamma_n)_{n\in\NN}$ be a sequence in 
$\left[\varepsilon,\pinf\right[$. Further,
for every $n\in\NN$, let $\widetilde{x}_n\in\HH$ and let 
$K_n\colon\HH\to\HH$ be a monotone
operator such that $\ran K_n\subset\ran(K_n+\gamma_nM)$
and $K_n+\gamma_nM$ is injective. Iterate
\begin{equation}
\label{e:fejer15}
\begin{array}{l}
\text{for}\;n=0,1,\ldots\\
\left\lfloor
\begin{array}{l}
y_n=J_{\gamma_n M}^{K_n}\widetilde{x}_n\\
y_n^*=\gamma_n^{-1}(K_n\widetilde{x}_n-K_ny_n)\\
\text{if}\;\scal{y_n-x_n}{y_n^*}<0\\
\left\lfloor
\begin{array}{l}
x_{n+1}=x_n+\dfrac{\lambda_n\sscal{y_n-x_n}{y_n^*}}
{\|y_n^*\|^2}\,y_n^*\\
\end{array}
\right.\\
\text{else}\\
\left\lfloor
\begin{array}{l}
x_{n+1}=x_n.
\end{array}
\right.\\[2mm]
\end{array}
\right.\\
\end{array}
\end{equation}
Then the following hold:
\begin{enumerate}
\item
\label{t:1i}
$\sum_{n\in\NN}\|x_{n+1}-x_n\|^2<\pinf$.
\item
\label{t:1ii}
Suppose that the following are satisfied:
\begin{enumerate}[label={\rm[\alph*]}]
\item
\label{t:1iia}
$\widetilde{x}_n-x_n\to 0$.
\item
\label{t:1iib}
$\sscal{\widetilde{x}_n-y_n}{\norma{(K_n\widetilde{x}_n-K_ny_n)}}
\to 0\quad\Rightarrow\quad
\begin{cases}
\widetilde{x}_n-y_n\weakly 0\\
K_n\widetilde{x}_n-K_ny_n\to 0.
\end{cases}$
\end{enumerate}
Then $(x_n)_{n\in\NN}$ converges weakly to a point in $Z$.
\end{enumerate}
\end{theorem}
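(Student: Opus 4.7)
The plan is to reduce Theorem~\ref{t:1} to Proposition~\ref{p:1}. First I would observe that Proposition~\ref{p:21}\ref{p:21iii} applied to $(K_n,\gamma_n M)$ yields $(y_n,y_n^*)\in\gra M$ for every $n\in\NN$. Hence the update rule~\eqref{e:fejer15} is exactly of the form~\eqref{e:fejer14}. Item~\ref{t:1i} then follows at once from Proposition~\ref{p:1}\ref{p:1i}.

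For item~\ref{t:1ii}, by Proposition~\ref{p:1}\ref{p:1ii} it suffices to verify that every weak sequential cluster point of $(x_n)_{n\in\NN}$ lies in $Z$. The central quantitative step is to show that
\begin{equation}
\label{p:plan}
\sscal{\widetilde{x}_n-y_n}{\norma{(K_n\widetilde{x}_n-K_ny_n)}}\to 0.
\end{equation}
From the iteration, $\|x_{n+1}-x_n\|=\lambda_n\max\{0,\scal{x_n-y_n}{\norma{(y_n^*)}}\}$, so item~\ref{t:1i} together with $\lambda_n\ge\varepsilon$ gives $\max\{0,\scal{x_n-y_n}{\norma{(y_n^*)}}\}\to 0$, i.e. $\limsup\scal{x_n-y_n}{\norma{(y_n^*)}}\le 0$. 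Since $y_n^*=\gamma_n^{-1}(K_n\widetilde{x}_n-K_ny_n)$ with $\gamma_n>0$, we have $\norma{(y_n^*)}=\norma{(K_n\widetilde{x}_n-K_ny_n)}$, and the monotonicity of $K_n$ yields $\scal{\widetilde{x}_n-y_n}{y_n^*}\ge 0$, so $\scal{\widetilde{x}_n-y_n}{\norma{(y_n^*)}}\ge 0$. Decomposing
\begin{equation*}
\sscal{\widetilde{x}_n-y_n}{\norma{(y_n^*)}}=\sscal{x_n-y_n}{\norma{(y_n^*)}}+\sscal{\widetilde{x}_n-x_n}{\norma{(y_n^*)}},
\end{equation*}
and using $\|\norma{(y_n^*)}\|\le 1$ together with assumption [a] ($\widetilde{x}_n-x_n\to 0$), the second term vanishes; combined with the one-sided bound and nonnegativity, \eqref{p:plan} follows.

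Now assumption [b] converts \eqref{p:plan} into $\widetilde{x}_n-y_n\weakly 0$ and $K_n\widetilde{x}_n-K_ny_n\to 0$. The latter, divided by $\gamma_n\ge\varepsilon$, gives $y_n^*\to 0$, and the former together with [a] yields $y_n-x_n\weakly 0$. Finally, if $x_{k_n}\weakly x$ for some subsequence, then $y_{k_n}\weakly x$ and $y_{k_n}^*\to 0$ with $(y_{k_n},y_{k_n}^*)\in\gra M$; by the weak-strong sequential closedness of the graph of the maximally monotone operator $M$ (e.g. \cite[Proposition~20.32]{Livre1}), we conclude $(x,0)\in\gra M$, i.e.\ $x\in Z$. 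Proposition~\ref{p:1}\ref{p:1ii} then delivers weak convergence of $(x_n)_{n\in\NN}$ to a point in $Z$.

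The main obstacle is establishing \eqref{p:plan}: the iterate $x_n$ rather than $\widetilde{x}_n$ appears in the acceptance test, so one has to use the monotonicity of the kernel $K_n$ to produce a sign that pairs with the one coming from the update, and then absorb the discrepancy $\widetilde{x}_n-x_n$ using [a]. The remainder is a clean application of the graph closure property and the fact that assumption [b] is precisely designed to upgrade the scalar vanishing in~\eqref{p:plan} to the topological information needed to pass to the limit.
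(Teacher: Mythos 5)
Your proposal is correct and follows essentially the same route as the paper's proof: reduce to Proposition~\ref{p:1} via Proposition~\ref{p:21}\ref{p:21iii}, use the update rule and $\lambda_n\geq\varepsilon$ to bound $\scal{x_n-y_n}{\norma{(y_n^*)}}$ by a multiple of $\|x_{n+1}-x_n\|$, split off $\widetilde{x}_n-x_n$, invoke the monotonicity of $K_n$ for the sign, and finish with assumption [b] and the weak--strong closedness of $\gra M$. The only differences are cosmetic (your $\max\{0,\cdot\}$ formulation of the step length versus the paper's case-by-case inequality, and a different but equally valid citation for graph closedness).
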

\begin{proof}
By Proposition~\ref{p:21}\ref{p:21iii},
\begin{equation}
\label{e:3817}
(\forall n\in\NN)\quad(y_n,y_n^*)\in\gra M.
\end{equation}
Therefore, \ref{t:1i} follows from Proposition~\ref{p:1}\ref{p:1i}.
It remains to prove \ref{t:1ii}. To this end, take a strictly
increasing sequence $(k_n)_{n\in\NN}$ in $\NN$
and a point $x\in\HH$ such that $x_{k_n}\weakly x$. 
In view of Proposition~\ref{p:1}\ref{p:1ii},
we must show that $x\in Z$. We infer from
\ref{t:1ii}\ref{t:1iia} that
\begin{equation}
\label{e:8317}
\widetilde{x}_{k_n}\weakly x.
\end{equation}
Next,
by \eqref{e:1826} and \eqref{e:fejer15}, for every $n\in\NN$,
if $\scal{x_n-y_n}{y_n^*}>0$, then
$y_n^*\neq 0$ and 
\begin{equation}
\sscal{x_n-y_n}{\norma{(y_n^*)}}
=\dfrac{\scal{x_n-y_n}{y_n^*}}{\|y_n^*\|}
=\lambda_n^{-1}\|x_{n+1}-x_n\|
\leq\varepsilon^{-1}\|x_{n+1}-x_n\|;
\end{equation}
otherwise, $\scal{x_n-y_n}{y_n^*}\leq 0$ and
it thus results from \eqref{e:1826} that
\begin{align}
\sscal{x_n-y_n}{\norma{(y_n^*)}}
&=
\begin{cases}
0,&\text{if}\;y_n^*=0;\\
\dfrac{\scal{x_n-y_n}{y_n^*}}{\|y_n^*\|},
&\text{otherwise}
\end{cases}
\nonumber\\
&\leq 0
\nonumber\\
&=\varepsilon^{-1}\|x_{n+1}-x_n\|.
\end{align}
Therefore, using \ref{t:1i}
and the monotonicity of $(K_n)_{n\in\NN}$, we obtain
\begin{align}
\label{e:93485t}
0&\leftarrow\varepsilon^{-1}\|x_{n+1}-x_n\|\nonumber\\
&\geq\scal{x_n-y_n}{\norma{(y_n^*)}}\nonumber\\
&=\sscal{x_n-\widetilde{x}_n}{\norma{(K_n\widetilde{x}_n-K_ny_n)}}
+\sscal{\widetilde{x}_n-y_n}{\norma{(K_n\widetilde{x}_n-K_ny_n)}}
\nonumber\\
&\geq\sscal{x_n-\widetilde{x}_n}
{\norma{(K_n\widetilde{x}_n-K_ny_n)}}.
\end{align}
However, by the Cauchy--Schwarz inequality
and \ref{t:1ii}\ref{t:1iia}, 
\begin{equation}
\abscal{x_n-\widetilde{x}_n}
{\norma{(K_n\widetilde{x}_n-K_ny_n)}}
\leq\|x_n-\widetilde{x}_n\|\to 0.
\end{equation}
Hence, \eqref{e:93485t} implies that
$\scal{\widetilde{x}_n-y_n}
{\norma{(K_n\widetilde{x}_n-K_ny_n)}}\to 0$.
In turn, we deduce from \ref{t:1ii}\ref{t:1iib} that
$\widetilde{x}_n-y_n\weakly 0$ and
$K_n\widetilde{x}_n-K_ny_n\to 0$. Altogether,
since $\sup_{n\in\NN}\gamma_n^{-1}\leq\varepsilon^{-1}$,
it follows from \eqref{e:3817} and \eqref{e:8317} that
\begin{equation}
y_{k_n}=\widetilde{x}_{k_n}+(y_{k_n}-\widetilde{x}_{k_n})
\weakly x
\end{equation}
and
\begin{equation}
My_{k_n}\ni y_{k_n}^*=
\gamma_{k_n}^{-1}(K_{k_n}\widetilde{x}_{k_n}-K_{k_n}y_{k_n})\to 0.
\end{equation}
Appealing to the maximal monotonicity of $M$, 
\cite[Proposition~20.38(ii)]{Livre1} allows us to conclude that 
$x\in Z$.
\end{proof}

\begin{remark}
\label{r:3}
Condition~\ref{t:1ii}\ref{t:1iib} in Theorem~\ref{t:1} is satisfied
in particular when there exist $\alpha$ and $\beta$ in $\RPP$ such
that the kernels $(K_n)_{n\in\NN}$ are $\alpha$-strongly monotone
and $\beta$-Lipschitzian.
\end{remark}

\begin{remark}
\label{r:2}
The auxiliary sequence $(\widetilde{x}_n)_{n\in\NN}$
in Theorem~\ref{t:1} can serve several purposes.
In general, it provides the flexibility
of not applying the warped resolvent to the current iterate.
Here are some noteworthy candidates.
\begin{enumerate}
\item
At iteration $n$, $\widetilde{x}_n$ can model an additive 
perturbation of $x_n$, say $\widetilde{x}_n=x_n+e_n$. Here the
error sequence $(e_n)_{n\in\NN}$ need only satisfy $\|e_n\|\to 0$
and not the usual summability condition
$\sum_{n\in\NN}\|e_n\|<\pinf$ required
in many methods, e.g., \cite{Bot13c,Siop13,Svva12,Bang13}.
\item
\label{r:2ii}
Mimicking the behavior of so-called inertial methods 
\cite{Atto19,Siop17}, 
let $(\alpha_n)_{n\in\NN}$ be a bounded sequence in $\RR$ and set
$(\forall n\in\NN\smallsetminus\{0\})$ 
$\widetilde{x}_n=x_n+\alpha_n(x_n-x_{n-1})$. Then
Theorem~\ref{t:1}\ref{t:1i}
yields $\|\widetilde{x}_n-x_n\|=|\alpha_n|\,\|x_n-x_{n-1}\|\to 0$
and therefore
assumption \ref{t:1ii}\ref{t:1iia} holds in
Theorem~\ref{t:1}. More generally, weak convergence
results can be derived from Theorem~\ref{t:1} for iterations with
memory, that is,
\begin{equation}
(\forall n\in\NN)\quad \widetilde{x}_n=\sum_{j=0}^n\mu_{n,j}x_j,
\quad\text{where}\quad
(\mu_{n,j})_{0\leq j\leq n}\in\RR^{n+1}
\quad\text{and}\quad
\sum_{j=0}^n\mu_{n,j}=1.
\end{equation}
Here condition \ref{t:1ii}\ref{t:1iia} holds if
$(1-\mu_{n,n})x_n-\sum_{j=0}^{n-1}\mu_{n,j}x_j\to 0$.
In the case of standard inertial methods, weak convergence requires
more stringent conditions on the weights $(\mu_{n,j})_{n\in\NN,
0\leq j\leq n}$ \cite{Siop17}.
\item
Nonlinear perturbations can also be considered. For instance, 
at iteration $n$, $\widetilde{x}_n=\proj_{C_n}x_n$ is an
approximation to $x_n$ from some suitable closed convex set
$C_n\subset\XX$. 
\end{enumerate}
\end{remark}

\begin{remark}
The independent work \cite{Gise19} was posted on arXiv at the same
time as the report \cite{Warp19} from which our paper is derived. 
The former uses a notion of resolvents subsumed by
Definition~\ref{d:wr} to explore
the application of an algorithm similar to \eqref{e:fejer15} with
no perturbation (i.e., for every $n\in\NN$,
$\widetilde{x}_n=x_n$). The work \cite{Gise19} nicely complements
ours in the sense that it proposes applications to
splitting schemes not discussed here, which further
attests to the versatility and effectiveness of the notion of 
warped proximal iterations. 
\end{remark}

We now turn our attention to a variant of Theorem~\ref{t:1} that
guarantees strong convergence of the iterates to a best
approximation. In the spirit of Haugazeau's algorithm 
(see \cite[Th\'eor\`eme~3-2]{Haug68} and 
\cite[Corollary~30.15]{Livre1}), it
involves outer approximations consisting of
the intersection of two half-spaces. 
For convenience, given $(x,y,z)\in\HH^3$, we set
\begin{equation}
\label{e:blah}
H(x,y)=\menge{u\in\HH}{\scal{u-y}{x-y}\leq 0}
\end{equation}
and, if $R=H(x,y)\cap H(y,z)\neq\emp$, $Q(x,y,z)=\proj_Rx$. The
latter can be computed explicitly as follows
(see \cite[Th\'eor\`eme~3-1]{Haug68} or 
\cite[Corollary~29.25]{Livre1}).

\begin{lemma}
\label{l:haugazeauy}
Let $(x,y,z)\in\HH^3$. Set $R=H(x,y)\cap H(y,z)$, 
$\chi=\scal{x-y}{y-z}$, $\mu=\|x-y\|^2$,
$\nu=\|y-z\|^2$, and $\rho=\mu\nu-\chi^2$.
Then exactly one of the following holds:
\begin{enumerate}
\item
\label{c:haugazeaui}
$\rho=0$ and $\chi<0$, in which case $R=\emp$.
\item 
\label{c:haugazeauii}
\emph{[}$\,\rho=0$ and $\chi\geq 0\,$\emph{]} or 
$\rho>0$, in which case ${R}\neq\emp$ and 
\begin{equation}
\label{e:01-05}
Q(x,y,z)=
\begin{cases}
z,&\text{if}\;\rho=0\;\text{and}\;
\chi\geq 0;\\[+0mm]
\displaystyle
x+(1+\chi/\nu)(z-y), 
&\text{if}\;\rho>0\;\text{and}\;
\chi\nu\geq\rho;\\
\displaystyle y+(\nu/\rho)
\big(\chi(x-y)+\mu(z-y)\big), 
&\text{if}\;\rho>0\;\text{and}\;\chi\nu<\rho.
\end{cases}
\end{equation}
\end{enumerate}
\end{lemma}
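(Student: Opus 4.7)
The plan is to treat $Q(x,y,z)$ as the solution of the projection problem $\minimize{u\in R}{\tfrac{1}{2}\|u-x\|^2}$ and to derive both the nonemptiness of $R$ and the explicit formulas from the Karush--Kuhn--Tucker (KKT) optimality conditions associated with the two half-space constraints. First, the Cauchy--Schwarz inequality applied to $\chi=\scal{x-y}{y-z}$ gives $\chi^2\leq\mu\nu$, so $\rho\geq 0$, with equality exactly when $x-y$ and $y-z$ are linearly dependent. This naturally partitions the analysis into $\rho=0$ and $\rho>0$.

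For $\rho=0$ I would first dispose of the trivial cases $x=y$ or $y=z$ by inspection (both force $\chi=0$ and $Q=z$), and then write $y-z=s(x-y)$ with $\sign(s)=\sign(\chi)$. When $\chi<0$, the outer normals $x-y$ and $y-z$ of $H(x,y)$ and $H(y,z)$ are antiparallel; parametrising the affine line through $x$, $y$, $z$ by a scalar reduces the two half-space conditions to two incompatible scalar inequalities, forcing $R=\emp$ and establishing~\ref{c:haugazeaui}. When $\chi\geq 0$, one verifies directly that $z\in R$ since $\scal{z-y}{x-y}=-\chi\leq 0$ and $\scal{z-z}{y-z}=0$, and the identity $x-z=(1+\chi/\nu)(y-z)$ furnishes a KKT certificate with multipliers $\lambda_1=0$ and $\lambda_2=1+\chi/\nu\geq 0$, yielding $Q(x,y,z)=z$ as claimed.

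For $\rho>0$ the vectors $x-y$ and $y-z$ are linearly independent, and I would apply the KKT conditions directly: a point $Q\in R$ equals $\proj_R x$ iff there exist $\lambda_1,\lambda_2\in\RP$ with $x-Q=\lambda_1(x-y)+\lambda_2(y-z)$ together with complementary slackness. The analysis splits on whether the first constraint is active. If $\lambda_1=0$, substituting $Q=x-\lambda_2(y-z)$ into $\scal{Q-z}{y-z}=0$ gives $\lambda_2=1+\chi/\nu$, so $Q=x+(1+\chi/\nu)(z-y)$; feasibility $Q\in H(x,y)$ then reduces, after a short calculation, precisely to $\chi\nu\geq\rho$, matching the second branch of~\eqref{e:01-05}. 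If instead both constraints are active, solving the $2\times 2$ linear system obtained from $\scal{Q-y}{x-y}=0$ and $\scal{Q-z}{y-z}=0$ with $Q=x-\lambda_1(x-y)-\lambda_2(y-z)$ yields $\lambda_1=(\rho-\chi\nu)/\rho$ and $\lambda_2=\mu\nu/\rho$; the requirement $\lambda_1\geq 0$ is equivalent to $\chi\nu\leq\rho$, and substitution produces $Q=y+(\nu/\rho)(\chi(x-y)+\mu(z-y))$, matching the third branch.

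The main obstacle is the case $\rho=0$ with $\chi<0$: showing $R=\emp$ (rather than merely that the two boundary hyperplanes are parallel) requires identifying precisely why the opposing orientations of the half-spaces are incompatible, which is cleanest after restricting to the affine line through $x,y,z$. The remaining verifications for $\rho>0$ are essentially routine $2\times 2$ algebra, but care is needed to check that the multiplier signs correctly align with the dichotomy $\chi\nu\geq\rho$ versus $\chi\nu<\rho$, so that the two KKT subcases jointly cover every feasible configuration.
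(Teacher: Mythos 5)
Your argument is correct, but note that the paper does not actually prove this lemma: it is quoted as a known result with pointers to Haugazeau's th\`ese and to \cite[Corollary~29.25]{Livre1}, so any self-contained proof is ``different'' from what the paper offers. The classical proofs in those references proceed by direct verification: one exhibits the candidate point $Q$ in each branch and checks the variational characterization $Q\in R$ and $(\forall u\in R)\;\scal{u-Q}{x-Q}\leq 0$, together with an ad hoc argument for the empty-intersection case. Your KKT/normal-cone route reorganizes the same computations around the identity $\proj_Rx=Q\Leftrightarrow[\,Q\in R\;\text{and}\;x-Q\in N_R(Q)\,]$, which has the advantage of \emph{deriving} the three formulas and the dichotomy $\chi\nu\geq\rho$ versus $\chi\nu<\rho$ from the multiplier signs rather than verifying them after the fact; it also yields $R\neq\emp$ in case (ii) for free, since each candidate $Q$ is checked to be feasible. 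Two small points deserve explicit mention in a full write-up: (a) the decomposition $N_R(Q)=\cone\{x-y\ \text{(if active)}\}+\cone\{y-z\ \text{(if active)}\}$ requires no constraint qualification here because $R$ is polyhedral -- this should be stated, as it is the load-bearing step; (b) in the subcase $\rho>0$ with $\lambda_1=0$ you should rule out $\lambda_2=0$ (it would force $Q=x$, contradicting $\mu=\scal{x-y}{x-y}>0$), and observe that $\lambda_2=1+\chi/\nu\geq 0$ is automatic on the branch $\chi\nu\geq\rho>0$ since this forces $\chi>0$. With those details supplied, the exhaustiveness of your two KKT subcases follows because the remaining active-set patterns ($Q=x$ or $Q=y$) are infeasible when $\mu>0$ and $\nu>0$, and the argument is complete.
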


Our second abstract convergence principle can now be stated.

\begin{proposition}
\label{p:2}
Let $M\colon\HH\to 2^{\HH}$ be a maximally monotone operator
such that $Z=\zer M\neq\emp$, let $x_0\in\HH$, and let
$(y_n,y_n^*)_{n\in\NN}$ be a sequence in $\gra M$.
For every $n\in\NN$, set
\begin{equation}
\label{e:fejer36}
x_{n+1/2}=
\begin{cases}
x_n+\dfrac{\scal{y_n-x_n}{y_n^*}}{\|y_n^*\|^2}\,y_n^*,
&\text{if}\:\:\scal{y_n-x_n}{y_n^*}<0;\\
x_n,&\text{otherwise}
\end{cases}
\quad\text{and}\quad
x_{n+1}=Q\big(x_0,x_n,x_{n+1/2}\big).
\end{equation}
Then the following hold:
\begin{enumerate}
\item
\label{p:2i}
$\sum_{n\in\NN}\|x_{n+1}-x_n\|^2<\pinf$ and
$\sum_{n\in\NN}\|x_{n+1/2}-x_n\|^2<\pinf$.
\item
\label{p:2ii}
Suppose that every weak sequential cluster point of
$(x_n)_{n\in\NN}$ is in $Z$. Then $(x_n)_{n\in\NN}$ converges
strongly to $\proj_Zx_0$.
\end{enumerate}
\end{proposition}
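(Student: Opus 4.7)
My plan is to cast the scheme \eqref{e:fejer36} as an instance of Haugazeau's outer-approximation method applied to the set $Z$, and then invoke the known convergence theory for that framework.

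The first step is to reuse the monotonicity argument from the proof of Proposition~\ref{p:1}: for every $z\in Z$ and every $n\in\NN$, since $(z,0)$ and $(y_n,y_n^*)$ lie in $\gra M$, one has $\scal{y_n-z}{y_n^*}\geq 0$, so $Z\subset H_n:=\menge{u\in\HH}{\scal{u-y_n}{y_n^*}\leq 0}$. By \cite[Example~29.20]{Livre1}, the first identity in \eqref{e:fejer36} means precisely that $x_{n+1/2}=\proj_{H_n}x_n$. Since $Z\subset H_n$ and $x_{n+1/2}$ is the projection of $x_n$ onto $H_n$, the characterization of projections onto closed convex sets yields $\scal{z-x_{n+1/2}}{x_n-x_{n+1/2}}\leq 0$ for every $z\in Z$, which, in the notation \eqref{e:blah}, is exactly $Z\subset H(x_n,x_{n+1/2})$.

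Next I would prove by induction on $n$ that $Z\subset H(x_0,x_n)$, with the base case $n=0$ being trivial since $H(x_0,x_0)=\HH$. The inductive step combines the inclusion just established with the inductive hypothesis to give $\emp\neq Z\subset H(x_0,x_n)\cap H(x_n,x_{n+1/2})$, so in particular this intersection is nonempty. Lemma~\ref{l:haugazeauy}\ref{c:haugazeauii} then guarantees that $x_{n+1}=Q(x_0,x_n,x_{n+1/2})$ is the projection of $x_0$ onto that intersection, and, arguing again through the projection characterization, $\scal{z-x_{n+1}}{x_0-x_{n+1}}\leq 0$ for every $z\in Z$, i.e.\ $Z\subset H(x_0,x_{n+1})$, closing the induction.

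At this point the scheme fits the standard Haugazeau template, and I would invoke \cite[Corollary~30.15]{Livre1} (or equivalently \cite[Th\'eor\`eme~3-2]{Haug68}) directly: the invariants $x_{n+1}=\proj_{H(x_0,x_n)\cap H(x_n,x_{n+1/2})}x_0$ together with $Z\subset H(x_0,x_n)\cap H(x_n,x_{n+1/2})$ imply that $(x_n)_{n\in\NN}$ is bounded, that $\sum_{n\in\NN}\|x_{n+1}-x_n\|^2<\pinf$, and that $\sum_{n\in\NN}\|x_{n+1/2}-x_n\|^2<\pinf$ (via the Pythagorean decompositions afforded by the nested projections onto $H(x_0,x_n)$); this gives~\ref{p:2i}. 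For \ref{p:2ii}, the same corollary asserts that, provided every weak sequential cluster point of $(x_n)_{n\in\NN}$ belongs to $Z$, the sequence converges strongly to $\proj_Z x_0$.

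The only real obstacle is the inductive verification that $Z\subset H(x_0,x_n)\cap H(x_n,x_{n+1/2})$, which is the hypothesis needed to apply the Haugazeau machinery; once that invariant is in hand, both conclusions are immediate consequences of the cited results.
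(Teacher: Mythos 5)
Your proof is correct and takes essentially the same route as the paper: identify $x_{n+1/2}=\proj_{H_n}x_n$ with $Z\subset H_n\cap H(x_n,x_{n+1/2})$ and then hand everything to the abstract Haugazeau-type convergence principle (the paper cites \cite[Proposition~2.1]{Nfao15} for this step; note that \cite[Corollary~30.15]{Livre1} is the cyclic-projection instance of that principle rather than the abstract statement you actually need, which is the general Haugazeau theorem in that same section). The only cosmetic differences are that you obtain $Z\subset H(x_n,x_{n+1/2})$ from the variational characterization of $\proj_{H_n}$ whereas the paper identifies $H(x_n,x_{n+1/2})$ explicitly as $\HH$ or $H_n$, and that you spell out the induction guaranteeing well-posedness, which the paper delegates to the cited proposition.
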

\begin{proof}
Set $(\forall n\in\NN)$ 
$H_n=\menge{z\in\HH}{\scal{z-y_n}{y_n^*}\leq 0}$.
Then, as in the proof of Proposition~\ref{p:1},
$Z$ is a nonempty closed convex subset of $\XX$ and
$Z\subset\bigcap_{n\in\NN}H_n$. On the one hand,
\begin{equation}
\label{e:h1}
(\forall n\in\NN)\quad
x_{n+1/2}=\proj_{H_n}x_n\quad\text{and}\quad
x_{n+1}=Q\big(x_0,x_n,x_{n+1/2}\big).
\end{equation}
On the other hand, by \eqref{e:blah},
\begin{align}
(\forall n\in\NN)\quad H\big(x_n,x_{n+1/2}\big)&=
\begin{cases}
\HH,&\text{if}\;x\in H_n;\\
H_n,&\text{otherwise}
\end{cases}
\nonumber\\
&\supset Z.
\end{align}
The claims therefore follow from \cite[Proposition~2.1]{Nfao15}.
\end{proof}

\begin{theorem}
\label{t:2}
Let $M\colon\HH\to 2^{\HH}$ be a maximally monotone operator such
that $Z=\zer M\neq\emp$, let $x_0\in\HH$, 
and let $(\gamma_n)_{n\in\NN}$ be a sequence in $\RPP$
such that $\inf_{n\in\NN}\gamma_n>0$.
For every $n\in\NN$, let $\widetilde{x}_n\in\HH$ and let
$K_n\colon\HH\to\HH$ be a monotone operator such that
$\ran K_n\subset\ran(K_n+\gamma_nM)$
and $K_n+\gamma_nM$ is injective. Iterate
\begin{equation}
\label{e:fejer16}
\begin{array}{l}
\text{for}\;n=0,1,\ldots\\
\left\lfloor
\begin{array}{l}
y_n=J_{\gamma_n M}^{K_n}\widetilde{x}_n\\
y_n^*=\gamma_n^{-1}(K_n\widetilde{x}_n-K_ny_n)\\
\text{if}\;\scal{y_n-x_n}{y_n^*}< 0\\
\left\lfloor
\begin{array}{l}
x_{n+1/2}=x_n+\dfrac{\scal{y_n-x_n}{y_n^*}}{\|y_n^*\|^2}\,
y_n^*\\
\end{array}
\right.\\
\text{else}\\
\left\lfloor
\begin{array}{l}
x_{n+1/2}=x_n\\
\end{array}
\right.\\
x_{n+1}=Q(x_0,x_n,x_{n+1/2}).\\
\end{array}
\right.\\
\end{array}
\end{equation}
Then the following hold:
\begin{enumerate}
\item
\label{t:2i}
$\sum_{n\in\NN}\|x_{n+1}-x_n\|^2<\pinf$ and
$\sum_{n\in\NN}\|x_{n+1/2}-x_n\|^2<\pinf$.
\item
\label{t:2ii}
Suppose that the following are satisfied:
\begin{enumerate}[label={\rm[\alph*]}]
\item
\label{t:2iia}
$\widetilde{x}_n-x_n\to 0$.
\item
\label{t:2iib}
$\sscal{\widetilde{x}_n-y_n}{\norma{(K_n\widetilde{x}_n-K_ny_n)}}
\to 0\quad\Rightarrow\quad
\begin{cases}
\widetilde{x}_n-y_n\weakly 0\\
K_n\widetilde{x}_n-K_ny_n\to 0.
\end{cases}$
\end{enumerate}
Then $(x_n)_{n\in\NN}$ converges strongly to $\proj_Zx_0$.
\end{enumerate}
\end{theorem}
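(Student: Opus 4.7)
The plan is to follow the same strategy as in Theorem~\ref{t:1}, replacing Proposition~\ref{p:1} by Proposition~\ref{p:2}. The only genuinely new ingredient is that the Haugazeau-type update $x_{n+1}=Q(x_0,x_n,x_{n+1/2})$ forces $x_{n+1}-x_n \to 0$ rather than $x_{n+1}-x_n\to 0$ only along a Fej\'er-type argument, and this is already packaged inside Proposition~\ref{p:2}.

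First I would note that, by Proposition~\ref{p:21}\ref{p:21iii} applied with kernel $K_n$ and parameter $\gamma_n$, for every $n\in\NN$ the pair $(y_n,y_n^*)$ constructed in \eqref{e:fejer16} lies in $\gra M$. Hence the iteration \eqref{e:fejer16} exactly instantiates the abstract scheme \eqref{e:fejer36} of Proposition~\ref{p:2}. Assertion~\ref{t:2i} is then immediate from Proposition~\ref{p:2}\ref{p:2i}.

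For \ref{t:2ii}, it suffices by Proposition~\ref{p:2}\ref{p:2ii} to show that every weak sequential cluster point of $(x_n)_{n\in\NN}$ belongs to $Z$. Fix a strictly increasing sequence $(k_n)_{n\in\NN}$ in $\NN$ and $x\in\HH$ with $x_{k_n}\weakly x$. Condition~\ref{t:2ii}\ref{t:2iia} gives $\widetilde{x}_{k_n}\weakly x$. Now I would reproduce the chain of estimates from the proof of Theorem~\ref{t:1}\ref{t:1ii}: splitting cases on the sign of $\scal{y_n-x_n}{y_n^*}$ and using \eqref{e:1826}, one obtains
\begin{equation}
\sscal{x_n-y_n}{\norma{(y_n^*)}}\leq\|x_{n+1/2}-x_n\|,
\end{equation}
while monotonicity of $K_n$ yields
\begin{equation}
\sscal{x_n-y_n}{\norma{(K_n\widetilde{x}_n-K_ny_n)}}
\geq\sscal{x_n-\widetilde{x}_n}{\norma{(K_n\widetilde{x}_n-K_ny_n)}}
+\sscal{\widetilde{x}_n-y_n}{\norma{(K_n\widetilde{x}_n-K_ny_n)}}.
\end{equation}
Since $\|x_{n+1/2}-x_n\|\to 0$ by \ref{t:2i} and $\|x_n-\widetilde{x}_n\|\to 0$ by \ref{t:2ii}\ref{t:2iia}, the Cauchy--Schwarz inequality combined with $\|\norma{(\mute)}\|\leq 1$ forces $\sscal{\widetilde{x}_n-y_n}{\norma{(K_n\widetilde{x}_n-K_ny_n)}}\to 0$.

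Invoking \ref{t:2ii}\ref{t:2iib} then gives $\widetilde{x}_n-y_n\weakly 0$ and $K_n\widetilde{x}_n-K_ny_n\to 0$. Combined with $\widetilde{x}_{k_n}\weakly x$ and the uniform bound $\gamma_n^{-1}\leq 1/\inf_{n\in\NN}\gamma_n$, this yields $y_{k_n}\weakly x$ and $y_{k_n}^*=\gamma_{k_n}^{-1}(K_{k_n}\widetilde{x}_{k_n}-K_{k_n}y_{k_n})\to 0$, with $(y_{k_n},y_{k_n}^*)\in\gra M$. The maximal monotonicity of $M$ and \cite[Proposition~20.38(ii)]{Livre1} then give $x\in Z$, completing the proof. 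The only potential snag is verifying that the key estimate carries over verbatim, but since \ref{t:2i} supplies $\|x_{n+1/2}-x_n\|\to 0$ in place of $\|x_{n+1}-x_n\|\to 0$ used in Theorem~\ref{t:1}, the argument transports with no modification.
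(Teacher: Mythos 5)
Your proposal is correct and follows essentially the same route as the paper, which likewise reduces part (i) to Proposition~\ref{p:2}\ref{p:2i} via Proposition~\ref{p:21}\ref{p:21iii} and, for part (ii), reruns the cluster-point argument of Theorem~\ref{t:1}\ref{t:1ii} with $\|x_{n+1/2}-x_n\|\to 0$ in place of $\|x_{n+1}-x_n\|\to 0$ before invoking Proposition~\ref{p:2}\ref{p:2ii}. (Only cosmetic remark: your second displayed estimate is in fact an identity by linearity; the monotonicity of $K_n$ is what makes the term $\sscal{\widetilde{x}_n-y_n}{\norma{(K_n\widetilde{x}_n-K_ny_n)}}$ nonnegative, which is the half of the sandwich you need.)
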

\begin{proof}
Proposition~\ref{p:21}\ref{p:21iii} asserts that
$(\forall n\in\NN)$ $(y_n,y_n^*)\in\gra M$.
Thus, we obtain \ref{t:2i} from Proposition~\ref{p:2}\ref{p:2i}.
In the light of Proposition~\ref{p:2}\ref{p:2ii},
to establish \ref{t:2ii}, we need to show that every weak
sequential cluster point of $(x_n)_{n\in\NN}$ is a zero of $M$.
Since \ref{t:2i} asserts that $x_{n+1/2}-x_n\to 0$, this is done
as in the proof of Theorem~\ref{t:1}\ref{t:1ii}.
\end{proof}

We complete this section with the following remarks.

\begin{remark}
\label{r:8}
Suppose that $\GG$ and $\KK$ are real Hilbert spaces
and that $\HH=\GG\times\KK$.
Let $A\colon\GG\to 2^{\GG}$ and $B\colon\KK\to 2^{\KK}$
be maximally monotone, and let $L\in\BL(\GG,\KK)$. Define
\begin{equation}
\label{e:9999}
M\colon\HH\to 2^{\HH}\colon
(x,v^*)\mapsto (Ax+L^*v^*)\times(-Lx+B^{-1}v^*).
\end{equation}
In \cite{Siop14,Nfao15,MaPr18} the problem of finding a zero of
$M$ (and hence a solution to the monotone inclusion
$0\in Ax+L^*(B(Lx))$) is approached by generating, at each
iteration $n$, points $(a_n,a_n^*)\in\gra A$ and
$(b_n,b_n^*)\in\gra B$. This does provide a point
$(y_n,y_n^*)=
((a_n,b_n^*),(a_n^*+L^*b_n^*,-La_n+b_n))\in\gra M$,
which shows that the algorithms proposed in 
\cite{Siop14,Nfao15,MaPr18} are actually instances of
the conceptual principles laid out in
Propositions~\ref{p:1} and \ref{p:2}.
In particular, the primal-dual framework of 
\cite{Siop14} corresponds to applying Theorem~\ref{t:1} to
the operator $M$ of \eqref{e:9999} with kernels
\begin{equation}
\label{e:k4}
(\forall n\in\NN)\quad K_n\colon\XX\to {\XX}
\colon(x,v^*)\mapsto\big(\gamma_n^{-1}x-L^*v^*,
Lx+\mu_nv^*\big).
\end{equation}
Likewise, that of \cite{Nfao15} corresponds to the application of
Theorem~\ref{t:2} to this setting.
\end{remark}

\begin{remark}
\label{r:1234}
In Theorems~\ref{t:1} and \ref{t:2}, the algorithms operate
by using a single point $(y_n,y_n^*)$ in $\gra M$
at iteration $n$. 
It may be advantageous to use a finite family
$(y_{i,n},y^*_{i,n})_{i\in I_n}$ of points in $\gra M$, say
\begin{equation}
(\forall i\in I_n)\quad
(y_{i,n},y^*_{i,n})=\Big(J_{\gamma_{i,n}M}^{K_{i,n}}
\widetilde{x}_{i,n},
\gamma_{i,n}^{-1}(K_{i,n}\widetilde{x}_{i,n}-K_{i,n}y_{i,n})\Big).
\end{equation}
By monotonicity of $M$,
$(\forall i\in I_n)(\forall z\in\zer M)$
$\scal{z}{y_{i,n}^*}\leq\scal{y_{i,n}}{y_{i,n}^*}$.
Therefore, using ideas found in the area of convex feasibility
algorithms \cite{Cras95,Lopu97}, at every iteration $n$,
given strictly positive weights 
$(\omega_{i,n})_{i\in I_n}$ adding up to $1$, we average
these inequalities to create a new half-space $H_n$ containing
$\zer M$, namely
\begin{equation}
\zer M\subset H_n=\menge{z\in\HH}{\scal{z}{y_n^*}\leq\eta_n},
\quad\text{where}\quad 
\begin{cases}
y_n^*=\sum_{i\in I_n}\omega_{i,n}y_{i,n}^*\\
\eta_n=\sum_{i\in I_n}\omega_{i,n}\scal{y_{i,n}}{y_{i,n}^*}.
\end{cases}
\end{equation}
Now set 
\begin{equation}
\Lambda_n=
\begin{cases}
\dfrac{\sum_{i\in I_n}\omega_{i,n}\scal{y_{i,n}-x_n}{y_{i,n}^*}}
{\big\|\sum_{i\in I_n}\omega_{i,n}y_{i,n}^*\big\|^2},
&\text{if}\;\sum_{i\in I_n}\omega_{i,n}
\scal{x_n-y_{i,n}}{y^*_{i,n}}>0;\\
0,&\text{otherwise.}
\end{cases}
\end{equation}
Then, employing $\proj_{H_n}x_n=x_n+\Lambda_n\sum_{i\in I_n}
\omega_{i,n}y^*_{i,n}$ as the point $x_{n+1}$ in \eqref{e:fejer15}
and as the point $x_{n+1/2}$ in \eqref{e:fejer16} results in
multi-point extensions of Theorems~\ref{t:1} and \ref{t:2}.
\end{remark}

\section{Applications}
\label{sec:5}
We apply Theorem~\ref{t:1} to design new algorithms to solve
complex monotone inclusion problems in a real Hilbert space $\HH$. 
We do not mention explicitly
minimization problems as they follow, with usual constraint
qualification conditions, by considering monotone inclusions
involving subdifferentials as maximally monotone operators
\cite{Livre1,Siop13}. For brevity, we do not mention either 
the strongly convergent counterparts of each of the 
corollaries below that can be systematically obtained 
using Theorem~\ref{t:2}.

Let us note that the most basic instantiation of Theorem~\ref{t:1}
is obtained by setting $(\forall n\in\NN)$ $K_n=\Id$,
$\widetilde{x}_n=x_n$, and $\lambda_n=1$. In this case, the warped
proximal algorithm \eqref{e:fejer15} reduces to the basic proximal
point algorithm \eqref{e:prox1}.

In connection with Remark~\ref{r:2},
let us first investigate the convergence of a novel perturbed 
forward-backward-forward algorithm with memory. This will require
the following fact.

\begin{lemma}
\label{l:3295}
Let $B\colon\HH\to\HH$ be Lipschitzian with constant 
$\beta\in\RPP$, let $W\colon\HH\to\HH$ be strongly monotone
with constant $\alpha\in\RPP$,
let $\varepsilon\in\left]0,\alpha\right[$,
let $\gamma\in\left]0,(\alpha-\varepsilon)/\beta\right]$,
and set $K=W-\gamma B$. Then the following hold:
\begin{enumerate}
\item
\label{l:3295i}
$K$ is $\varepsilon$-strongly monotone.
\item
\label{l:3295iii}
Suppose that $\alpha=1$ and $W=\Id$.
Then $K$ is cocoercive with constant $1/(2-\varepsilon)$.
\end{enumerate}
\end{lemma}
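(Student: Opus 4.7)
For \ref{l:3295i}, I would proceed by direct calculation. For arbitrary $x,y\in\HH$, expand
\[
\langle Kx-Ky,x-y\rangle = \langle Wx-Wy,x-y\rangle - \gamma\langle Bx-By,x-y\rangle,
\]
invoke $\alpha$-strong monotonicity of $W$ to bound the first summand below by $\alpha\|x-y\|^2$, and combine Cauchy--Schwarz with $\beta$-Lipschitz continuity of $B$ to bound $\langle Bx-By,x-y\rangle\leq\beta\|x-y\|^2$. The hypothesis $\gamma\beta\leq\alpha-\varepsilon$ then yields $\langle Kx-Ky,x-y\rangle\geq(\alpha-\gamma\beta)\|x-y\|^2\geq\varepsilon\|x-y\|^2$, which is the desired strong monotonicity.

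For \ref{l:3295iii}, substitute $W=\Id$ and $\alpha=1$, so $K=\Id-\gamma B$ with $\gamma\beta\leq 1-\varepsilon$. Unwinding the definition from Section~\ref{sec:2}, $1/(2-\varepsilon)$-cocoercivity of $K$ is the assertion that
\[
(2-\varepsilon)\langle Kx-Ky,x-y\rangle \geq \|Kx-Ky\|^2.
\]
Setting $u=x-y$ and $v=Bx-By$, so that $Kx-Ky=u-\gamma v$, expanding both sides and cancelling reduces this inequality to
\[
(1-\varepsilon)\|u\|^2 + \varepsilon\gamma\langle u,v\rangle - \gamma^2\|v\|^2 \geq 0.
\]
I would then lower-bound $\langle u,v\rangle\geq-\|u\|\|v\|$ via Cauchy--Schwarz, introduce the scalar $s=\gamma\|v\|$, and observe that $s\leq\gamma\beta\|u\|\leq(1-\varepsilon)\|u\|$ by the Lipschitz property of $B$ and the hypothesis on $\gamma$. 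It then suffices to verify that the quadratic $s\mapsto(1-\varepsilon)\|u\|^2-\varepsilon s\|u\|-s^2$ is nonnegative on the interval $[0,(1-\varepsilon)\|u\|]$. Since its derivative $-\varepsilon\|u\|-2s$ is nonpositive, the function is decreasing in $s$, and a direct check at $s=(1-\varepsilon)\|u\|$ gives the value $(1-\varepsilon)\|u\|^2-\varepsilon(1-\varepsilon)\|u\|^2-(1-\varepsilon)^2\|u\|^2=0$, which concludes the argument.

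The main obstacle is purely algebraic: the cocoercivity constant $1/(2-\varepsilon)$ is sharp and is saturated precisely when $\gamma\beta=1-\varepsilon$ and $v=-\beta u$, so the calculation in \ref{l:3295iii} must be carried out tightly without squandering any slack. Beyond this, both items are routine applications of Cauchy--Schwarz and the standing hypotheses.
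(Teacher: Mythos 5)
Your proposal is correct. For item (i) your argument coincides with the paper's: expand $\scal{x-y}{Kx-Ky}$, bound the $W$-term below by $\alpha\|x-y\|^2$, bound the $B$-term via Cauchy--Schwarz and the Lipschitz constant, and invoke $\gamma\beta\le\alpha-\varepsilon$. For the cocoercivity claim the paper takes a shorter route through the calculus of averaged operators: since $\gamma B=\Id-K$ is $(1-\varepsilon)$-Lipschitzian, it is averaged with constant $(2-\varepsilon)/2$ by \cite[Proposition~4.38]{Livre1}, and \cite[Proposition~4.39]{Livre1} (an operator $T$ is $\alpha$-averaged if and only if $\Id-T$ is $1/(2\alpha)$-cocoercive) then yields the constant $1/(2-\varepsilon)$. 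Your direct computation is exactly what one obtains by unwinding those two citations in this special case: the reduction of $(2-\varepsilon)\scal{Kx-Ky}{x-y}\ge\|Kx-Ky\|^2$ to
\begin{equation*}
(1-\varepsilon)\|u\|^2+\varepsilon\gamma\scal{u}{v}-\gamma^2\|v\|^2\ge 0
\end{equation*}
is algebraically right, and the monotone-decreasing quadratic in $s=\gamma\|v\|\in\big[0,(1-\varepsilon)\|u\|\big]$ does vanish at the right endpoint, which confirms both the inequality and the tightness of the constant. What your version buys is self-containment and an explicit view of where equality is attained; what the paper's version buys is brevity. Both arguments correctly avoid using any monotonicity of $B$, which is not assumed in the lemma.
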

\begin{proof}
\ref{l:3295i}:
By the Cauchy--Schwarz inequality,
\begin{align}
(\forall x\in\HH)(\forall y\in\HH)\quad
\scal{x-y}{Kx-Ky}
&=\scal{x-y}{Wx-Wy}-\gamma\scal{x-y}{Bx-By}
\nonumber\\
&\geq\alpha\|x-y\|^2-\gamma\|x-y\|\,\|Bx-By\|
\nonumber\\
&\geq\alpha\|x-y\|^2-\gamma\beta\|x-y\|^2
\nonumber\\
&\geq\varepsilon\|x-y\|^2.
\end{align}

\ref{l:3295iii}:
Since $\gamma B$ is $(1-\varepsilon)$-Lipschitzian,
\cite[Proposition~4.38]{Livre1} entails that
$\gamma B$ is averaged with constant $(2-\varepsilon)/2$.
Hence, since $\gamma B=\Id-K$,
\cite[Proposition~4.39]{Livre1}
implies that $K$ is cocoercive with constant
$1/(2-\varepsilon)$.
\end{proof}

\begin{corollary}
\label{c:fbf}
Let $A\colon\HH\to 2^{\HH}$ be maximally monotone,
let $B\colon\HH\to\HH$ be monotone and $\beta$-Lipschitzian
for some $\beta\in\RPP$, let $(\alpha,\chi)\in\RPP^2$,
and let $\varepsilon\in\left]0,\alpha/(\beta+1)\right[$.
For every $n\in\NN$, let $W_n\colon\HH\to\HH$ be $\alpha$-strongly
monotone and $\chi$-Lipschitzian,
and let
$\gamma_n\in\left[\varepsilon,(\alpha-\varepsilon)/\beta\right]$.
Take $x_0\in\HH$,
let $(\lambda_n)_{n\in\NN}$ be a sequence in $\left]0,2\right[$
such that $0<\inf_{n\in\NN}\lambda_n\leq\sup_{n\in\NN}\lambda_n
<2$, and let $(e_n)_{n\in\NN}$ be a sequence in $\HH$ such that
$e_n\to 0$. Furthermore, let $m\in\NN\smallsetminus\{0\}$
and let $(\mu_{n,j})_{n\in\NN,0\leq j\leq n}$ be a real array
that satisfies the following:
\begin{enumerate}[label={\rm[\alph*]}]
\item
\label{c:fbfa}
For every integer $n>m$ and every
integer $j\in\left[0,n-m-1\right]$, $\mu_{n,j}=0$.
\item
\label{c:fbfb}
For every $n\in\NN$, $\sum_{j=0}^n\mu_{n,j}=1$.
\item
\label{c:fbfc}
$\sup_{n\in\NN}\max_{0\leq j\leq n}|\mu_{n,j}|<\pinf$.
\end{enumerate}
Iterate
\begin{equation}
\label{e:fbf}
\begin{array}{l}
\text{for}\;n=0,1,\ldots\\
\left\lfloor
\begin{array}{l}
\widetilde{x}_n=e_n+\sum_{j=0}^n\mu_{n,j}x_j\\
v_n^*=W_n\widetilde{x}_n-\gamma_nB\widetilde{x}_n\\
y_n=(W_n+\gamma_n A)^{-1}v_n^*\\
y_n^*=\gamma_n^{-1}(v_n^*-W_ny_n)+By_n\\
\text{if}\;\scal{y_n-x_n}{y_n^*}<0\\
\left\lfloor
\begin{array}{l}
x_{n+1}=x_n
+\dfrac{\lambda_n\scal{y_n-x_n}{y_n^*}}{\|y_n^*\|^2}\,y_n^*\\
\end{array}
\right.\\
\text{else}\\
\left\lfloor
\begin{array}{l}
x_{n+1}=x_n.\\
\end{array}
\right.\\[2mm]
\end{array}
\right.\\
\end{array}
\end{equation}
Suppose that $\zer(A+B)\neq\emp$.
Then the following hold:
\begin{enumerate}
\item
\label{c:fbfi}
$\sum_{n\in\NN}\|x_{n+1}-x_n\|^2<\pinf$.
\item
\label{c:fbfii}
$(x_n)_{n\in\NN}$ converges weakly to a point in
$\zer(A+B)$.
\end{enumerate}
\end{corollary}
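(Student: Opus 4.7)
The plan is to recast algorithm \eqref{e:fbf} as an instance of the warped proximal iteration \eqref{e:fejer15} applied to the maximally monotone operator $M=A+B$ with kernels $K_n=W_n-\gamma_n B$, and then to invoke Theorem~\ref{t:1}. Since $B$ is monotone, single-valued, and $\beta$-Lipschitzian with full domain, it is maximally monotone, whence $M=A+B$ is maximally monotone and $\zer M=\zer(A+B)\neq\emp$ by hypothesis.

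First I would verify the standing assumptions on the kernels. By Lemma~\ref{l:3295}\ref{l:3295i}, each $K_n$ is $\varepsilon$-strongly monotone (hence monotone, as required by Theorem~\ref{t:1}); the triangle inequality also makes it $L$-Lipschitzian with $L=\chi+\alpha-\varepsilon$, uniformly in $n$. The operator $K_n+\gamma_n M=W_n+\gamma_n A$ is the sum of the continuous, full-domain, maximally monotone operator $W_n$ and the maximally monotone operator $\gamma_n A$, hence maximally monotone; being $\alpha$-strongly monotone, it is injective, and Minty's theorem yields surjectivity. Thus $\ran K_n\subset\HH=\ran(K_n+\gamma_n M)$, so $J_{\gamma_n M}^{K_n}$ is well defined. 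Invoking Proposition~\ref{p:21}\ref{p:21iii}, the update $y_n=(W_n+\gamma_n A)^{-1}v_n^*$ in \eqref{e:fbf} coincides with $y_n=(K_n+\gamma_n M)^{-1}K_n\widetilde{x}_n=J_{\gamma_n M}^{K_n}\widetilde{x}_n$, and a direct expansion gives $\gamma_n^{-1}(K_n\widetilde{x}_n-K_n y_n)=\gamma_n^{-1}(v_n^*-W_n y_n)+By_n$, so \eqref{e:fbf} is exactly \eqref{e:fejer15}. Therefore \ref{c:fbfi} follows from Theorem~\ref{t:1}\ref{t:1i}.

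For \ref{c:fbfii}, it remains to check conditions \ref{t:1iia} and \ref{t:1iib} of Theorem~\ref{t:1}. Condition \ref{t:1iib} is the observation of Remark~\ref{r:3}: when $K_n\widetilde{x}_n\neq K_n y_n$,
\begin{equation*}
\sscal{\widetilde{x}_n-y_n}{\norma{(K_n\widetilde{x}_n-K_n y_n)}}
=\frac{\scal{\widetilde{x}_n-y_n}{K_n\widetilde{x}_n-K_n y_n}}{\|K_n\widetilde{x}_n-K_n y_n\|}
\geq\frac{\varepsilon\|\widetilde{x}_n-y_n\|^2}{L\|\widetilde{x}_n-y_n\|}
=\frac{\varepsilon}{L}\|\widetilde{x}_n-y_n\|,
\end{equation*}
so the left-hand side converging to $0$ forces $\widetilde{x}_n-y_n\to 0$ strongly, and Lipschitz continuity then yields $K_n\widetilde{x}_n-K_n y_n\to 0$.

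The main technical step is condition \ref{t:1iia}, which rests on the memory bookkeeping. Using \ref{c:fbfb} to write $\widetilde{x}_n-x_n=e_n+\sum_{j=0}^{n-1}\mu_{n,j}(x_j-x_n)$, and restricting the active indices via \ref{c:fbfa} to $j\in\{n-m,\ldots,n-1\}$ for $n>m$, the telescoping estimate $\|x_j-x_n\|\leq\sum_{k=j}^{n-1}\|x_{k+1}-x_k\|$ together with the uniform bound \ref{c:fbfc} yields
\begin{equation*}
\|\widetilde{x}_n-x_n\|\leq\|e_n\|+\Big(\sup_{n,j}|\mu_{n,j}|\Big)\sum_{j=n-m}^{n-1}\sum_{k=j}^{n-1}\|x_{k+1}-x_k\|.
\end{equation*}
Since $e_n\to 0$ and \ref{c:fbfi} implies $\|x_{k+1}-x_k\|\to 0$, the right-hand side vanishes as $n\to\infty$, establishing \ref{t:1iia}. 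An application of Theorem~\ref{t:1}\ref{t:1ii} then delivers \ref{c:fbfii}.
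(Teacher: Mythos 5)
Your proposal is correct and follows essentially the same route as the paper: recast \eqref{e:fbf} as the warped proximal iteration \eqref{e:fejer15} for $M=A+B$ with kernels $K_n=W_n-\gamma_nB$, establish uniform strong monotonicity and Lipschitz continuity of the kernels via Lemma~\ref{l:3295}\ref{l:3295i} to secure well-definedness and condition \ref{t:1iib} (your explicit estimate is exactly the content of Remark~\ref{r:3}), and verify \ref{t:1iia} by the same finite-memory telescoping bound. The only cosmetic differences are your slightly sharper Lipschitz constant $\chi+\alpha-\varepsilon$ versus the paper's $\alpha+\chi$ and your appeal to a Minty-type surjectivity argument where the paper cites Proposition~\ref{p:20}\ref{p:20ii}\ref{p:20iic}; neither affects correctness.
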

\begin{proof}
We apply Theorem~\ref{t:1}
with $M=A+B$ and $(\forall n\in\NN)$ $K_n=W_n-\gamma_nB$.
First, \cite[Corollary~20.28]{Livre1} asserts that
$B$ is maximally monotone. Therefore, $M$
is maximally monotone by virtue of
\cite[Corollary~25.5(i)]{Livre1}.
Next, in view of Lemma~\ref{l:3295}\ref{l:3295i},
the kernels $(K_n)_{n\in\NN}$ are
$\varepsilon$-strongly monotone.
Furthermore, the kernels $(K_n)_{n\in\NN}$ are 
Lipschitzian with constant $\alpha+\chi$ since
\begin{align}
(\forall x\in\HH)(\forall y\in\HH)\quad
\|K_nx-K_ny\|
&\leq
\|W_nx-W_ny\|+\gamma_n\|Bx-By\|
\nonumber\\
&\leq\chi\|x-y\|+\frac{\alpha-\varepsilon}{\beta}\beta\|x-y\|
\nonumber\\
&\leq(\alpha+\chi)\|x-y\|.
\end{align}
Therefore, for every $n\in\NN$,
since $K_n+\gamma_n M$ is maximally monotone,
Proposition~\ref{p:20}\ref{p:20ii}\ref{p:20iic}\&%
\ref{p:20i}\ref{p:20ib}
entail that $\ran K_n\subset\ran(K_n+\gamma_n M)$ and
$K_n+\gamma_n M$ is injective.
Let us also observe that \eqref{e:fbf} is a special case
of \eqref{e:fejer15}.

\ref{c:fbfi}:
This follows from Theorem~\ref{t:1}\ref{t:1i}.

\ref{c:fbfii}:
Set $\mu=\sup_{n\in\NN}\max_{0\leq j\leq n}|\mu_{n,j}|$.
For every integer $n>m$, it results from \ref{c:fbfa}
and \ref{c:fbfb} that
\begin{align}
\|\widetilde{x}_n-x_n\|
&=\bigg\|e_n+\Sum_{j=n-m}^n\mu_{n,j}(x_j-x_n)\bigg\|
\nonumber\\
&\leq\|e_n\|+\Sum_{j=n-m}^n|\mu_{n,j}|\|x_j-x_n\|
\nonumber\\
&\leq\|e_n\|+\mu\Sum_{j=n-m}^n\|x_j-x_n\|
\nonumber\\
&=\|e_n\|+\mu\Sum_{j=0}^m\|x_n-x_{n-j}\|.
\end{align}
Therefore, \ref{c:fbfi} and \ref{c:fbfc} imply that
$\widetilde{x}_n-x_n\to 0$.
On the other hand, it follows from Remark~\ref{r:3} that 
condition~\ref{t:1ii}\ref{t:1iib} in Theorem~\ref{t:1} is 
satisfied. Hence, the conclusion follows from 
Theorem~\ref{t:1}\ref{t:1ii}.
\end{proof}

Next, we recover Tseng's forward-backward-forward algorithm
\cite{Livre1,Tsen00}.

\begin{corollary}
\label{c:fbf2}
Let $A\colon\HH\to 2^{\HH}$ be maximally monotone,
let $B\colon\HH\to\HH$ be monotone and $\beta$-Lipschitzian
for some $\beta\in\RPP$. Suppose that $\zer(A+B)\neq\emp$,
take $x_0\in\HH$,
let $\varepsilon\in\left]0,1/(\beta+1)\right[$,
and let $(\gamma_n)_{n\in\NN}$ be a sequence in
$\left[\varepsilon,(1-\varepsilon)/\beta\right]$.
Iterate
\begin{equation}
\label{e:8732}
\begin{array}{l}
\text{for}\;n=0,1,\ldots\\
\left\lfloor
\begin{array}{l}
v_n^*=\gamma_nBx_n\\
y_n=J_{\gamma_nA}(x_n-v_n^*)\\
x_{n+1}=y_n-\gamma_nBy_n+v_n^*.
\end{array}
\right.\\
\end{array}
\end{equation}
Then $(x_n)_{n\in\NN}$ converges weakly to a point in
$\zer(A+B)$.
\end{corollary}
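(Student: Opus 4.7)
The plan is to derive Corollary~\ref{c:fbf2} as a specialization of Corollary~\ref{c:fbf} with $W_n=\Id$ (giving $\alpha=\chi=1$), $e_n=0$, and memory length $m=1$ with $\mu_{n,n}=1$ and $\mu_{n,n-1}=0$, so that $\widetilde{x}_n=x_n$. With these choices the intermediate variables of \eqref{e:fbf} become $v_n^*=x_n-\gamma_n Bx_n$, $y_n=J_{\gamma_nA}(x_n-\gamma_n Bx_n)$, and $y_n^*=\gamma_n^{-1}(x_n-y_n)+By_n-Bx_n$, matching those in \eqref{e:8732}; moreover, a direct computation gives $y_n-\gamma_nBy_n+v_n^*=x_n-\gamma_ny_n^*$, so Tseng's update equals $x_n-\gamma_ny_n^*$. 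The task is therefore to reconcile this explicit step with the projected update of \eqref{e:fbf} through a judicious choice of the relaxation parameter $\lambda_n$.

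First, I would dispose of the trivial case $y_n=x_n$: then $y_n^*=0$, both updates return $x_n$, and any $\lambda_n\in\left]0,2\right[$ is admissible. When $y_n\neq x_n$, expanding
\begin{equation*}
\scal{y_n-x_n}{y_n^*}=-\gamma_n^{-1}\|y_n-x_n\|^2+\scal{y_n-x_n}{By_n-Bx_n}
\end{equation*}
and applying Cauchy--Schwarz together with the $\beta$-Lipschitz continuity of $B$ yields $\scal{y_n-x_n}{y_n^*}\leq(\beta-\gamma_n^{-1})\|y_n-x_n\|^2<0$, where the strict inequality comes from $\gamma_n\beta\leq 1-\varepsilon<1$. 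Consequently the ``if'' branch of \eqref{e:fbf} is active, and setting $\lambda_n=-\gamma_n\|y_n^*\|^2/\scal{y_n-x_n}{y_n^*}$ makes the warped proximal update coincide with $x_n-\gamma_ny_n^*$.

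The main technical step is then to verify that $\lambda_n$ lies in a compact subinterval of $\left]0,2\right[$. Introducing $u=x_n-y_n$ and $v=Bx_n-By_n$, one has $y_n^*=\gamma_n^{-1}u-v$, whence $\gamma_n\|y_n^*\|^2=\gamma_n^{-1}\|u\|^2-2\scal{u}{v}+\gamma_n\|v\|^2$ and $-\scal{y_n-x_n}{y_n^*}=\gamma_n^{-1}\|u\|^2-\scal{u}{v}$. For the upper bound, subtraction gives
\begin{equation*}
2-\lambda_n=\frac{\gamma_n^{-1}\|u\|^2-\gamma_n\|v\|^2}{\gamma_n^{-1}\|u\|^2-\scal{u}{v}}
\geq\frac{\gamma_n^{-1}(1-\gamma_n^2\beta^2)\|u\|^2}{\gamma_n^{-1}\|u\|^2}\geq\varepsilon(2-\varepsilon),
\end{equation*}
using monotonicity of $B$ to bound the denominator from above by $\gamma_n^{-1}\|u\|^2$ and the Lipschitz estimate to bound the numerator from below. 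For the lower bound, the triangle inequality $\|y_n^*\|\geq\gamma_n^{-1}\|u\|-\|v\|\geq\gamma_n^{-1}(1-\gamma_n\beta)\|u\|\geq\gamma_n^{-1}\varepsilon\|u\|$ combined with Cauchy--Schwarz in the denominator yields $\lambda_n\geq\gamma_n\|y_n^*\|/\|u\|\geq\varepsilon$. Therefore $\lambda_n\in[\varepsilon,2-\varepsilon(2-\varepsilon)]$, so $0<\inf_n\lambda_n\leq\sup_n\lambda_n<2$, and the conclusion follows from Corollary~\ref{c:fbf}\ref{c:fbfii}.

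The delicate point is not the algebra itself but the fact that Tseng's explicit extragradient-type step is not an orthogonal projection onto the separating half-space $H_n$; one has to recognize it as an \emph{over- or under-relaxed} projection with relaxation factor uniformly in $\left]0,2\right[$, and the two-sided bound on $\lambda_n$ is precisely what the hypothesis $\gamma_n\beta\leq 1-\varepsilon$ is designed to give.
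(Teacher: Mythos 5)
Your proposal is correct and follows essentially the same route as the paper: both identify Tseng's step $x_{n+1}=x_n-\gamma_ny_n^*$ as a relaxed projection onto the separating half-space with the data-dependent relaxation $\lambda_n=\gamma_n\|y_n^*\|^2/\scal{x_n-y_n}{y_n^*}$ and then bound $\lambda_n$ in a compact subinterval of $\left]0,2\right[$ using the kernel $\Id-\gamma_nB$. The only cosmetic differences are that the paper invokes Theorem~\ref{t:1} directly (rather than passing through Corollary~\ref{c:fbf} with $W_n=\Id$) and obtains the upper bound $\lambda_n\leq 2-\varepsilon$ from the cocoercivity of the kernel (Lemma~\ref{l:3295}\ref{l:3295iii}) instead of your equivalent elementary expansion giving $\lambda_n\leq 2-\varepsilon(2-\varepsilon)$.
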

\begin{proof}
We apply Theorem~\ref{t:1} with $M=A+B$ and
$(\forall n\in\NN)$ $K_n=\Id-\gamma_nB$
and $\widetilde{x}_n=x_n$.
Note that the kernels
$(K_n)_{n\in\NN}$ are cocoercive with constant
$1/(2-\varepsilon)$ by virtue of Lemma~\ref{l:3295}\ref{l:3295iii}.
Moreover, using Lemma~\ref{l:3295}\ref{l:3295i},
we deduce that the kernels $(K_n)_{n\in\NN}$
are strongly monotone with constant $\varepsilon$.
Thus, for every $n\in\NN$, since
$K_n+\gamma_n M=\Id+\gamma_n A$ is maximally monotone,
Proposition~\ref{p:20}\ref{p:20ii}%
\ref{p:20iic}\&\ref{p:20i}\ref{p:20ib}
assert that $\ran K_n\subset\ran(K_n+\gamma_nM)$ and
$K_n+\gamma_n M$ is injective. Now set
\begin{equation}
\label{e:1172}
(\forall n\in\NN)\quad
y_n^*=\gamma_n^{-1}(K_nx_n-K_ny_n)
\quad\text{and}\quad
\lambda_n=
\begin{cases}
\dfrac{\gamma_n\|y_n^*\|^2}{\scal{x_n-y_n}{y_n^*}},
&\text{if}\;\scal{x_n-y_n}{y_n^*}>0;\\
\varepsilon,&\text{otherwise}.
\end{cases}
\end{equation}
Fix $n\in\NN$. Then, by strong monotonicity
of $K_n$ and the Cauchy--Schwarz inequality,
\begin{equation}
\varepsilon\|x_n-y_n\|^2
\leq\scal{x_n-y_n}{K_nx_n-K_ny_n}
\leq\|x_n-y_n\|\,\|K_nx_n-K_ny_n\|.
\end{equation}
This implies that
$\scal{x_n-y_n}{y_n^*}
=\gamma_n^{-1}\scal{x_n-y_n}{K_nx_n-K_ny_n}
\leq\gamma_n^{-1}\|x_n-y_n\|\,\|K_nx_n-K_ny_n\|
\leq(\varepsilon\gamma_n)^{-1}\|K_nx_n-K_ny_n\|^2
=\varepsilon^{-1}\gamma_n\|y_n^*\|^2$
and therefore that $\lambda_n\geq\varepsilon$.
In addition, by cocoercivity of $K_n$,
$\gamma_n\|y_n^*\|^2
=\gamma_n^{-1}\|K_nx_n-K_ny_n\|^2
\leq(2-\varepsilon)\gamma_n^{-1}\scal{x_n-y_n}{K_nx_n-K_ny_n}
=(2-\varepsilon)\scal{x_n-y_n}{y_n^*}$
and thus $\lambda_n\leq 2-\varepsilon$.
Next, we derive from \eqref{e:8732} that
$y_n=J^{K_n}_{\gamma_n M}x_n$.
If $\scal{x_n-y_n}{y_n^*}>0$,
then \eqref{e:8732} and \eqref{e:1172} yield
$x_{n+1}=x_n-\gamma_ny_n^*
=x_n+\lambda_n\scal{y_n-x_n}{y_n^*}y_n^*/\|y_n^*\|^2$.
Otherwise,
$\scal{x_n-y_n}{y_n^*}\leq 0$ and
the cocoercivity of $K_n$ yields 
$\|y_n^*\|^2=\gamma_n^{-2}\|K_nx_n-K_ny_n\|^2
\leq(2-\varepsilon)\gamma_n^{-2}\scal{x_n-y_n}{K_nx_n-K_ny_n}
\leq 0$. 
Hence, $y_n^*=0$ and we therefore deduce from
\eqref{e:8732} that $x_{n+1}=x_n$.
Thus, \eqref{e:8732} is an instance of \eqref{e:fejer15}.
Next, condition~\ref{t:1ii}\ref{t:1iia} in Theorem~\ref{t:1}
is trivially satisfied and, in view of Remark~\ref{r:3},
condition~\ref{t:1ii}\ref{t:1iib} in Theorem~\ref{t:1} is
also fulfilled.
\end{proof}

We conclude this section by further illustrating the effectiveness
of warped resolvent iterations by designing a new method to solve an 
intricate system of monotone inclusions and its dual. We are not 
aware of a splitting method that could handle such a formulation
with a comparable level of flexibility. Special cases of this
system appear in \cite{Siop14,Bot13d,MaPr18,John18}.

\begin{problem}
\label{prob:16}
Let $(\GG_i)_{i\in I}$ and 
$(\KK_j)_{j\in J}$ be finite families of real Hilbert spaces.
For every $i\in I$ and 
$j\in J$, let $A_i\colon\GG_i\to 2^{\GG_i}$ and 
$B_j\colon\KK_j\to 2^{\KK_j}$ 
be maximally monotone,
let $C_i\colon\GG_i\to\GG_i$ be monotone and
$\mu_i$-Lipschitzian for some $\mu_i\in\RPP$,
let $D_j\colon\KK_j\to\KK_j$ be monotone and
$\nu_j$-Lipschitzian for some $\nu_j\in\RPP$,
let $L_{ji}\in\BL(\GG_i,\KK_j)$,
let $s_i^*\in\GG_i$, and let $r_j\in\KK_j$. 
Consider the system of coupled inclusions
\begin{multline}
\label{e:0371}
\text{find}\;\:(x_i)_{i\in I}\in\bigtimes_{i\in I}\GG_i
\;\:\text{such that}\\
(\forall i\in I)\quad
s_i^*\in A_ix_i+\Sum_{j\in J}L_{ji}^*
\bigg((B_j+D_j)\bigg(\Sum_{k\in I}L_{jk}
x_k-r_j\bigg)\bigg)+C_ix_i,
\end{multline}
its dual problem 
\begin{multline}
\label{e:7301}
\text{find}\;\:(v^*_j)_{j\in J}\in\bigtimes_{j\in J}
\KK_j\;\:\text{such that}\\\bigg(\exi(x_i)_{i\in I}\in
\bigtimes_{i\in I}\GG_i\bigg)(\forall i\in I)(\forall j\in J)\quad
\begin{cases}
s^*_i-\Sum_{k\in J}L_{ki}^*v_k^*\in A_ix_i+C_ix_i\\
v_j^*\in(B_j+D_j)\bigg(\Sum_{k\in I}L_{jk}x_k-r_j\bigg),
\end{cases}
\end{multline}
and the associated Kuhn--Tucker set
\begin{multline}
\label{e:6464}
Z=\bigg\{\big((x_i)_{i\in I},
(v^*_j)_{j\in J}\big)\;\bigg |\;
(\forall i\in I)\;\;x_i\in\GG_i\;\;\text{and}\;\;
s^*_i-\sum_{k\in J}L_{ki}^*v_k^*\in
A_ix_i+C_ix_i,\\\text{and}\:\;
(\forall j\in J)\;\;v_j^*\in\KK_j\;\;\text{and}\;\;
\sum_{k\in I}L_{jk}x_k-r_j\in
(B_j+D_j)^{-1}v_j^*
\bigg\}.
\end{multline}
We denote by $\mathscr{P}$ and $\mathscr{D}$
the sets of solutions to \eqref{e:0371} and \eqref{e:7301},
respectively. The problem is to find a point in $Z$.
\end{problem}

\begin{corollary}
\label{c:7}
Consider the setting of Problem~\ref{prob:16}.
For every $i\in I$ and every $j\in J$,
let $(\alpha_i,\chi_i,\beta_j,\kappa_j)\in\RPP^4$,
let $\varepsilon_i\in\left]0,\alpha_i/(\mu_i+1)\right[$,
let $\delta_j\in\left]0,\beta_j/(\nu_j+1)\right[$,
let $(F_{i,n})_{n\in\NN}$ be operators from $\GG_i$
to $\GG_i$ that are $\alpha_i$-strongly monotone
and $\chi_i$-Lipschitzian,
let $(W_{j,n})_{n\in\NN}$ be operators from $\KK_j$
to $\KK_j$ that are $\beta_j$-strongly monotone
and $\kappa_j$-Lipschitzian; in addition,
let $(\gamma_{i,n})_{n\in\NN}$ and $(\tau_{j,n})_{n\in\NN}$
be sequences in
$\left[\varepsilon_i,(\alpha_i-\varepsilon_i)/\mu_i\right]$
and $\left[\delta_j,(\beta_j-\delta_j)/\nu_j\right]$,
respectively. Suppose that $Z\neq\emp$ and that
\begin{equation}
\GG=\bigtimes_{i\in I}\GG_i,
\quad
\KK=\bigtimes_{j\in J}\KK_j,
\quad\text{and}\quad
\HH=\GG\times\KK\times\KK.
\end{equation}
Let $((x_{i,0})_{i\in I},(y_{j,0})_{j\in J},
(v_{j,0}^*)_{j\in J})$ and 
$((\widetilde{x}_{i,n})_{i\in I},
(\widetilde{y}_{j,n})_{j\in J},
(\widetilde{v}_{j,n}^*)_{j\in J})_{n\in\NN}$ be in $\HH$,
and let $(\lambda_n)_{n\in\NN}$ be a sequence in
$\left]0,2\right[$ such that
$0<\inf_{n\in\NN}\lambda_n\leq\sup_{n\in\NN}\lambda_n<2$.
Iterate
\pagebreak[1]
\begin{equation}
\label{e:1089}
\begin{array}{l}
\text{for}\;n=0,1,\ldots\\
\left\lfloor
\begin{array}{l}
\text{for every}\;i\in I\\
\left\lfloor
\begin{array}{l}
l_{i,n}^*=F_{i,n}\widetilde{x}_{i,n}-\gamma_{i,n}C_i
\widetilde{x}_{i,n}
-\gamma_{i,n}\sum_{j\in J}L_{ji}^*\widetilde{v}_{j,n}^*\\
a_{i,n}=\big(F_{i,n}+\gamma_{i,n}A_i\big)^{-1}
(l_{i,n}^*+\gamma_{i,n}s_i^*)\\
o_{i,n}^*=\gamma_{i,n}^{-1}(l_{i,n}^*
-F_{i,n}a_{i,n})+C_ia_{i,n}\\
\end{array}
\right.
\\
\text{for every}\;j\in J\\
\left\lfloor
\begin{array}{l}
\displaystyle
t^*_{j,n}=W_{j,n}\widetilde{y}_{j,n}-\tau_{j,n}D_j\widetilde{y}_{j,n}
+\tau_{j,n}\widetilde{v}_{j,n}^*\\
b_{j,n}=\big(W_{j,n}+\tau_{j,n}B_j\big)^{-1}t^*_{j,n}\\
f_{j,n}^*=\tau_{j,n}^{-1}(t_{j,n}^*
-W_{j,n}b_{j,n})+D_jb_{j,n}
\\
c_{j,n}=\sum_{i\in I}L_{ji}\widetilde{x}_{i,n}
-\widetilde{y}_{j,n}+\widetilde{v}_{j,n}^*-r_j\\
\end{array}
\right.
\\
\text{for every}\;i\in I\\
\left\lfloor
\begin{array}{l}
a_{i,n}^*=o_{i,n}^*+\sum_{j\in J}L^*_{ji}c_{j,n}
\end{array}
\right.
\\
\text{for every}\;j\in J\\
\left\lfloor
\begin{array}{l}
b_{j,n}^*=f_{j,n}^*-c_{j,n}
\\
c_{j,n}^*=r_j+b_{j,n}-\sum_{i\in I}L_{ji}a_{i,n}
\end{array}
\right.
\\
\sigma_n=\sum_{i\in I}\|a_{i,n}^*\|^2
+\sum_{j\in J}\big(\|b_{j,n}^*\|^2+\|c_{j,n}^*\|^2\big)
\\
\theta_n=\sum_{i\in I}\scal{a_{i,n}-x_{i,n}}{a_{i,n}^*}
+\sum_{j\in J}\big(\scal{b_{j,n}-y_{j,n}}{b_{j,n}^*}
+\scal{c_{j,n}-v_{j,n}^*}{c_{j,n}^*}
\big)\\
\text{if}\;\theta_n<0\\
\left\lfloor
\begin{array}{l}
\rho_n=\lambda_n\theta_n/\sigma_n
\end{array}
\right.\\
\text{else}\\
\left\lfloor
\begin{array}{l}
\rho_n=0
\end{array}
\right.\\
\text{for every}\;i\in I\\
\left\lfloor
\begin{array}{l}
x_{i,n+1}=x_{i,n}+\rho_na_{i,n}^*
\end{array}
\right.\\
\text{for every}\;j\in J\\
\left\lfloor
\begin{array}{l}
y_{j,n+1}=y_{j,n}+\rho_nb_{j,n}^*\\
v_{j,n+1}^*=v_{j,n}^*+\rho_nc_{j,n}^*.
\end{array}
\right.\\[4mm]
\end{array}
\right.
\end{array}
\end{equation}
Suppose that
\begin{equation}
\label{e:fr}
(\forall i\in I)(\forall j\in J)
\quad\widetilde{x}_{i,n}-x_{i,n}\to 0,
\quad\widetilde{y}_{j,n}-y_{j,n}\to 0,
\quad\text{and}\quad
\widetilde{v}^*_{j,n}-v^*_{j,n}\to 0.
\end{equation}
Set
$(\forall n\in\NN)$ $x_n=(x_{i,n})_{i\in I}$ and 
$v_n^*=(v_{j,n}^*)_{j\in J}$.
Then 
$(x_n)_{n\in\NN}$ converges weakly to a point
$\overline{x}\in\mathscr{P}$,
$(v_n^*)_{n\in\NN}$ converges weakly to a point
$\overline{v}^*\in\mathscr{D}$,
and $(\overline{x},\overline{v}^*)\in Z$.
\end{corollary}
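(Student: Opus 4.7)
My plan is to recast algorithm \eqref{e:1089} as an instance of the warped proximal iteration \eqref{e:fejer15} of Theorem~\ref{t:1}, applied on the product space $\HH=\GG\times\KK\times\KK$ to a single maximally monotone Kuhn--Tucker operator whose zeros encode~$Z$.

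Following Lemma~\ref{l:5498} and Remark~\ref{r:8}, I would define $M\colon\HH\to 2^{\HH}$ by
\begin{equation*}
M\big((x_i)_{i\in I},(y_j)_{j\in J},(v^*_j)_{j\in J}\big)=U_1\times U_2\times U_3,
\end{equation*}
where $U_1=\Cart_{i\in I}(-s_i^*+(A_i+C_i)x_i+\sum_{j\in J}L_{ji}^*v_j^*)$, $U_2=\Cart_{j\in J}((B_j+D_j)y_j-v_j^*)$, and $U_3=\Cart_{j\in J}\{r_j-\sum_{i\in I}L_{ji}x_i+y_j\}$. Each $A_i+C_i$ and $B_j+D_j$ is maximally monotone by \cite[Corollary~25.5(i)]{Livre1}, and the remaining terms assemble into a bounded skew-linear operator on $\HH$, so the argument of Lemma~\ref{l:5498}\ref{l:5498i} yields maximal monotonicity of $M$. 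Unwrapping $0\in M((\bar{x}_i),(\bar{y}_j),(\bar{v}^*_j))$ as in Lemma~\ref{l:5498}\ref{l:5498ii}--\ref{l:5498iii} then shows that $((\bar{x}_i),(\bar{v}^*_j))\in Z$, with the $x$-block in $\mathscr{P}$ and the $v^*$-block in $\mathscr{D}$; in particular $\zer M\neq\emp$.

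Next, I take $\gamma_n\equiv 1$ in Theorem~\ref{t:1} and introduce the kernels $K_n\colon\HH\to\HH$ by
\begin{equation*}
K_n\big((x_i),(y_j),(v_j^*)\big)=\bigg(\Big(\gamma_{i,n}^{-1}F_{i,n}x_i-C_ix_i-\sum_{j\in J} L_{ji}^*v_j^*\Big)_i,\Big(\tau_{j,n}^{-1}W_{j,n}y_j-D_jy_j+v_j^*\Big)_j,\Big(\sum_{i\in I} L_{ji}x_i-y_j+v_j^*\Big)_j\bigg).
\end{equation*}
Expanding the bilinear form $\scal{z-z'}{K_nz-K_nz'}$ for $z,z'\in\HH$, the couplings through $L_{ji}$ and between the $y_j$- and $v_j^*$-components cancel by skew symmetry, leaving a block-diagonal sum whose $i$-th $x$-block is $\scal{x_i-x'_i}{(\gamma_{i,n}^{-1}F_{i,n}-C_i)x_i-(\gamma_{i,n}^{-1}F_{i,n}-C_i)x'_i}$, whose $j$-th $y$-block is $\scal{y_j-y'_j}{(\tau_{j,n}^{-1}W_{j,n}-D_j)y_j-(\tau_{j,n}^{-1}W_{j,n}-D_j)y'_j}$, and whose $v^*$-block contributes $\sum_j\|v_j^*-v_j'^*\|^2$. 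Lemma~\ref{l:3295}\ref{l:3295i}, whose hypothesis is exactly the step-size bound on $\gamma_{i,n}$ (respectively $\tau_{j,n}$), applied to each $i$- and $j$-block, then delivers uniform $\alpha$-strong monotonicity of $(K_n)_{n\in\NN}$ for some $\alpha>0$; uniform Lipschitz continuity follows analogously from the bounds on $F_{i,n}$, $W_{j,n}$, $C_i$, $D_j$, and the $L_{ji}$. Proposition~\ref{p:20}\ref{p:20ii}\ref{p:20iic}\&\ref{p:20i}\ref{p:20ib} then give $\ran K_n\subset\ran(K_n+M)$ and injectivity of $K_n+M$, so $J_M^{K_n}$ is well defined.

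Finally, dividing the inclusion defining $a_{i,n}=(F_{i,n}+\gamma_{i,n}A_i)^{-1}(l_{i,n}^*+\gamma_{i,n}s_i^*)$ by $\gamma_{i,n}$, dividing that defining $b_{j,n}$ by $\tau_{j,n}$, and reading off $c_{j,n}$ from its formula, one verifies that $((a_{i,n}),(b_{j,n}),(c_{j,n}))=J_M^{K_n}((\widetilde{x}_{i,n}),(\widetilde{y}_{j,n}),(\widetilde{v}_{j,n}^*))$ while the triple $((a_{i,n}^*),(b_{j,n}^*),(c_{j,n}^*))$ coincides with $K_n(\widetilde{x}_n,\widetilde{y}_n,\widetilde{v}_n^*)-K_n(a_n,b_n,c_n)$, i.e.\ the $y_n^*$ of \eqref{e:fejer15} with $\gamma_n=1$. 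The scalars $\theta_n$ and $\sigma_n$ are precisely the inner product and squared norm entering \eqref{e:fejer15}, so the $\rho_n$-update reproduces that iteration verbatim. Hypothesis~\ref{t:1ii}\ref{t:1iia} of Theorem~\ref{t:1} is supplied by \eqref{e:fr}, and hypothesis~\ref{t:1ii}\ref{t:1iib} by Remark~\ref{r:3} via the uniform strong-monotonicity/Lipschitz bounds. Theorem~\ref{t:1}\ref{t:1ii} then gives weak convergence of the product iterate to some $(\bar{x},\bar{y},\bar{v}^*)\in\zer M$, from which the first paragraph extracts $\bar{x}\in\mathscr{P}$, $\bar{v}^*\in\mathscr{D}$, and $(\bar{x},\bar{v}^*)\in Z$. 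The chief obstacle is the uniform strong monotonicity of $K_n$: the $L_{ji}$-coupling block is not monotone on its own, so one must rely on the exact skew cancellations and then squeeze a uniform positive constant out of the diagonal $\gamma_{i,n}^{-1}F_{i,n}-C_i$ and $\tau_{j,n}^{-1}W_{j,n}-D_j$ blocks through the tight step-size windows.
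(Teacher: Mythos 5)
Your proposal is correct and follows essentially the same route as the paper's proof: the same Kuhn--Tucker operator $M$ from Lemma~\ref{l:5498} on $\HH=\GG\times\KK\times\KK$, the same kernels $K_n$ decomposed into a block-diagonal part (handled via Lemma~\ref{l:3295}\ref{l:3295i}) plus a skew bounded linear part, well-definedness via Proposition~\ref{p:20}, and the identification of \eqref{e:1089} with \eqref{e:fejer15} before invoking Theorem~\ref{t:1} with Remark~\ref{r:3} and Lemma~\ref{l:5498}\ref{l:5498ii}. The only difference is cosmetic: you write $M$ componentwise rather than first assembling the aggregate operators $A$, $B$, $L$ on the product spaces.
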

\begin{proof}
Define
\begin{equation}
\label{e:9906}
\begin{cases}
\displaystyle
A\colon\GG\to 2^{\GG}\colon
(x_i)_{i\in I}
\mapsto\bigtimes_{i\in I}(A_ix_i+C_ix_i)\\
\displaystyle
B\colon\KK\to 2^{\KK}\colon
(y_j)_{j\in J}
\mapsto\bigtimes_{j\in J}(B_jy_j+D_jy_j)\\
L\colon\GG\to\KK\colon
(x_i)_{i\in I}
\mapsto\bigg(\Sum_{i\in I}L_{ji}x_i\bigg)_{j\in J}\\
s^*=(s_i^*)_{i\in I}\quad\text{and}\quad
r=(r_j)_{j\in J}.
\end{cases}
\end{equation}
We observe that
\begin{equation}
\label{e:6789}
L^*\colon\KK\to\GG\colon
(v_j^*)_{j\in J}\mapsto
\Bigg(\Sum_{j\in J}L_{ji}^*v_j^*\Bigg)_{i\in I}.
\end{equation}
In the light of \cite[Proposition~20.23]{Livre1},
$A$ and $B$ are maximally monotone.
On the other hand, we deduce from \eqref{e:6464}, \eqref{e:9906},
and \eqref{e:6789} that
\begin{equation}
Z=\menge{(x,v^*)\in\GG\times\KK}{s^*-L^*v^*\in Ax\;\text{and}\;
Lx-r\in B^{-1}v^*}.
\end{equation}
Define
\begin{equation}
\label{e:4095}
M\colon\HH\to 2^{\HH}\colon
(x,y,v^*)\mapsto
(-s^*+Ax+L^*v^*)\times(By-v^*)\times\{r-Lx+y\}.
\end{equation}
Lemma~\ref{l:5498}\ref{l:5498i} entails that $M$ is maximally
monotone. Furthermore, since $Z\neq\emp$,
Lemma~\ref{l:5498}\ref{l:5498iii} yields $\zer M\neq\emp$.
Next, set 
\begin{equation}
S\colon\HH\to\HH\colon
(x,y,v^*)\mapsto(-L^*v^*,v^*,Lx-y)
\end{equation}
and, for every $n\in\NN$, 
\begin{equation}
\label{e:6095}
K_n\colon\HH\to\HH\colon
(x,y,v^*)\mapsto
\Big(\big(\gamma_{i,n}^{-1}F_{i,n}x_i-C_ix_i\big)_{i\in I}
-L^*v^*,
\big(\tau_{j,n}^{-1}W_{j,n}y_j-D_jy_j\big)_{j\in J}+v^*,
Lx-y+v^*\Big)
\end{equation}
and 
\begin{equation}
T_n\colon\HH\to\HH\colon
(x,y,v^*)\mapsto
\Big(\big(\gamma_{i,n}^{-1}F_{i,n}x_i-C_ix_i\big)_{i\in I},
\big(\tau_{j,n}^{-1}W_{j,n}y_j-D_jy_j\big)_{j\in J},v^*\Big).
\end{equation}
For every $i\in I$ and every $n\in\NN$,
using the facts that $C_i$ is $\mu_i$-Lipschitzian, that
$F_{i,n}$ is $\alpha_i$-strongly monotone, and that
$\gamma_{i,n}\in\left[\varepsilon_i,
(\alpha_i-\varepsilon_i)/\mu_i\right]$,
Lemma~\ref{l:3295}\ref{l:3295i} implies that
$F_{i,n}-\gamma_{i,n}C_i$ is $\varepsilon_i$-strongly monotone
and therefore, since $\gamma_{i,n}^{-1}\geq
\mu_i/(\alpha_i-\varepsilon_i)$,
it follows that $\gamma_{i,n}^{-1}F_{i,n}-C_i$
is strongly monotone with constant
$\varepsilon_i\mu_i/(\alpha_i-\varepsilon_i)$.
Likewise, for every $j\in J$ and every $n\in\NN$,
$\tau_{j,n}^{-1}W_{j,n}-D_j$ is strongly monotone with constant
$\delta_j\nu_j/(\beta_j-\delta_j)$.
Thus, upon setting
\begin{equation}
\vartheta=
\min\Bigg\{
\min_{i\in
I}\frac{\varepsilon_i\mu_i}{\alpha_i-\varepsilon_i},
\min_{j\in
J}\frac{\delta_j\nu_j}{\beta_j-\delta_j},1
\Bigg\},
\end{equation}
we get
\begin{align}
&(\forall n\in\NN)\big(\forall(x,y,v^*)\in\HH\big)
\big(\forall(a,b,c^*)\in\HH\big)
\nonumber\\
&\hskip 10mm
\sscal{(x,y,v^*)-(a,b,c^*)}{T_n(x,y,v^*)-T_n(a,b,c^*)}
\nonumber\\
&\hskip 20mm
=\Sum_{i\in I}
\sscal{x_i-a_i}{\big(\gamma_{i,n}^{-1}F_{i,n}x_i-C_ix_i\big)
-\big(\gamma_{i,n}^{-1}F_{i,n}a_i-C_ia_i\big)}
\nonumber\\
&\hskip 20mm
\quad\;+\Sum_{j\in J}\sscal{y_j-b_j}{\big(
\tau_{j,n}^{-1}W_{j,n}y_j-D_jy_j\big)-
\big(\tau_{j,n}^{-1}W_{j,n}b_j-D_jb_j\big)}
+\|v^*-c^*\|^2
\nonumber\\
&\hskip 20mm
\geq\vartheta\Sum_{i\in I}\|x_i-a_i\|^2
+\vartheta\Sum_{j\in J}\|y_j-b_j\|^2
+\vartheta\|v^*-c^*\|^2
\nonumber\\
&\hskip 20mm
=\vartheta\|(x,y,v^*)-(a,b,c^*)\|^2.
\end{align}
Hence, the operators
$(T_n)_{n\in\NN}$ are $\vartheta$-strongly monotone.
However, $S$ is linear, bounded, and $S^*=-S$.
It follows that the kernels
$(K_n)_{n\in\NN}=(T_n+S)_{n\in\NN}$ are $\vartheta$-strongly
monotone. Now, for every $i\in I$ and every $n\in\NN$,
since $\gamma_{i,n}^{-1}F_{i,n}$ is Lipschitzian with constant
$\chi_i/\varepsilon_i$, we deduce that
$\gamma_{i,n}^{-1}F_{i,n}-C_i$ is Lipschitzian with constant
$\chi_i/\varepsilon_i+\mu_i$.
Likewise, for every $j\in J$ and every $n\in\NN$,
$\tau_{j,n}^{-1}W_{j,n}-D_j$ is Lipschitzian with constant
$\kappa_j/\delta_j+\nu_j$. Hence, upon setting
\begin{equation}
\eta=\max\Big\{
\max_{i\in I}\{\chi_i/\varepsilon_i+\mu_i\},
\max_{j\in J}\{\kappa_j/\delta_j+\nu_j\},1\Big\},
\end{equation}
we obtain
\begin{align}
&\hskip -40mm
(\forall n\in\NN)\big(\forall(x,y,v^*)\in\HH\big)
\big(\forall(a,b,c^*)\in\HH\big)
\quad\|T_n(x,y,v^*)-T_n(a,b,c^*)\|^2
\nonumber\\
\hskip 35mm&
=\Sum_{i\in I}\big\|\big(\gamma_{i,n}^{-1}F_{i,n}x_i-C_ix_i\big)
-\big(\gamma_{i,n}^{-1}F_{i,n}a_i-C_ia_i\big)\big\|^2
\nonumber\\
&
\quad\;+\Sum_{j\in J}\big\| 
\big(\tau_{j,n}^{-1}W_{j,n}y_j-D_jy_j\big)-
\big(\tau_{j,n}^{-1}W_{j,n}b_j-D_jb_j\big)\big\|^2
+\|v^*-c^*\|^2
\nonumber\\
&
\leq\eta^2\Sum_{i\in I}\|x_i-a_i\|^2
+\eta^2\Sum_{j\in J}\|y_j-b_j\|^2
+\eta^2\|v^*-c^*\|^2
\nonumber\\
&
=\eta^2\|(x,y,v^*)-(a,b,c^*)\|^2.
\end{align}
This implies that the operators
$(T_n)_{n\in\NN}$ are $\eta$-Lipschitzian.
On the other hand, $S$ is Lipschitzian with constant
$\|S\|$. Altogether, the kernels
$(K_n)_{n\in\NN}$ are Lipschitzian with constant
$\eta+\|S\|$. In turn, using
Proposition~\ref{p:20}\ref{p:20ii}\ref{p:20iic}%
\&\ref{p:20i}\ref{p:20ib},
we infer that, for every $n\in\NN$, $\ran K_n\subset\ran(K_n+M)$
and $K_n+M$ is injective.
Now set 
\begin{multline}
(\forall n\in\NN)\quad
p_n=\big((x_{i,n})_{i\in I},
(y_{j,n})_{j\in J},(v^*_{j,n})_{j\in J}\big),\quad
\widetilde{p}_n=\big((\widetilde{x}_{i,n})_{i\in I},
(\widetilde{y}_{j,n})_{j\in J},
(\widetilde{v}^*_{j,n})_{j\in J}\big),\\
q_n=\big((a_{i,n})_{i\in I},
(b_{j,n})_{j\in J},(c_{j,n})_{j\in J}\big),\;
\quad\text{and}\quad
q_n^*=\big((a_{i,n}^*)_{i\in I},(b_{j,n}^*)_{j\in J},
(c_{j,n}^*)_{j\in J}\big).
\end{multline}
In view of \eqref{e:6095}, \eqref{e:4095}, \eqref{e:9906},
and \eqref{e:6789},
we deduce that \eqref{e:1089} assumes the form
\begin{equation}
\label{e:1207}
\begin{array}{l}
\text{for}\;n=0,1,\ldots\\
\left\lfloor
\begin{array}{l}
q_n=J_{M}^{K_n}\widetilde{p}_n\\
q_n^*=K_n\widetilde{p}_n-K_nq_n\\
\text{if}\;\scal{q_n-p_n}{q_n^*}<0\\
\left\lfloor
\begin{array}{l}
p_{n+1}=p_n+\dfrac{\lambda_n\sscal{q_n-p_n}
{q_n^*}}{\|q_n^*\|^2}\,q_n^*
\end{array}
\right.\\
\text{else}\\
\left\lfloor
\begin{array}{l}
p_{n+1}=p_n.\\
\end{array}
\right.\\[2mm]
\end{array}
\right.\\
\end{array}
\end{equation}
In addition, \eqref{e:fr} implies that $\widetilde{p}_n-p_n\to 0$.
Altogether, in the light of Theorem~\ref{t:1} and
Remark~\ref{r:3}, there exists
$(\overline{x},\overline{y},\overline{v}^*)\in\zer M$
such that $p_n\weakly(\overline{x},\overline{y},\overline{v}^*)$.
It follows that
$x_n\weakly\overline{x}$ and $v_n^*\weakly\overline{v}^*$. 
Further,
we conclude by using Lemma~\ref{l:5498}\ref{l:5498ii} that
$\overline{x}\in\mathscr{P}$, $\overline{v}^*\in\mathscr{D}$,
and $(\overline{x},\overline{v}^*)\in Z$.
\end{proof}

\end{document}